\newcommand{\vect}[1]{\overset{\rightharpoonup}{#1}}
\newif\ifshowvc
\def\blfootnote{\xdef\@thefnmark{}\@footnotetext}
\theoremstyle{plain}
\newtheorem{theorem}{Theorem}[section]
\newtheorem{lemma}[theorem]{Lemma}
\theoremstyle{definition}
\newtheorem{definition}[theorem]{Definition}
\theoremstyle{remark}
\newcommand{\beq}{\begin{align*}}
\newcommand{\eeq}{\end{align*}}
\newcommand{\ppp}{\[\begin{aligned}}
\newcommand{\ooo}{\end{aligned}\]}
\begin{document} 

\title{On Taylor series of zeros with general base function}

\author{Mario DeFranco}

\maketitle

\abstract{We prove a formula for the Taylor series coefficients of a zero of the sum of a complex-exponent polynomial and a base function which is a general holomorphic function with a simple zero. Such a Taylor series is more general than a Puiseux series. We prove an integrality result about these coefficients which implies and generalizes the integrality result of Sturmfels (``Solving algebraic equations in terms of $\mathcal{A}$-hypergeometric series". Discrete Math. 210 (2000) pp. 171-181). We also prove a transformation rule for a special case of these Taylor series.} 

\section{Introduction}
Determining the zeros of a polynomial is a fundamental objective throughout pure and applied mathematics. By the work of Abel \cite{Abel}, Ruffini \cite{Ruffini}, and Galois \cite{Galois}, there is no general solution by radicals for polynomials of degree five or more. Nevertheless, formal series expansions may provide a unified way to consider zeros of polynomials of any degree.

Many authors have studied such formal series. Birkeland \cite{Birkeland} obtained formulas for Taylor series coefficients using Lagrange inversion. Mayr \cite{Mayr} also studied these series using a system of differential equations. Sturmfels \cite{Sturmfels2000} obtained the same formulas using GKZ-systems (Gelfand-Kapranov-Zelevinsky) \cite{Gelfand89}, \cite{Gelfand90}. Herrera \cite{Herrera} used reversion of Taylor series. 

In this paper, using Taylor series directly, we prove a formula (Theorem \ref{t phi F}) for the Taylor series coefficients of a zero of a function that may be a polynomial or, more generally, a sum of a complex-exponent polynomial and a holomorphic function (the base function). We have defined a complex-exponent polynomial and the term ``base function" in \cite{DeFranco} (also defined below in Definition \ref{complex exponent poly}), and our proofs use similar techniques to those of \cite{DeFranco}, including the use of Theorem \ref{t deriv set} here. 
We next explain more about the set-up and relation to previous work. 

First we recall complex-exponent polynomials. For two complex numbers $z \neq 0$ and $\gamma$, we define complex exponentiation $z^\gamma$ in the conventional way by taking $z$ to be an element of the Riemann surface $L$ for the logarithm. This surface $L$ is parametrized by 
\[
L=\{(r,\theta,n)\colon r \in \mathbb{R}^+, \theta \in (-\pi, \pi], n \in \mathbb{Z}\}.
\]
Then for $z \in L$ corresponding to $(r,\theta,n)$, define $z^\gamma$ by 
\[
z^\gamma=e^{\gamma \ln(r)+i\gamma \theta+ 2 \pi i n \gamma} \in \mathbb{C}.
\]

\begin{definition} \label{complex exponent poly}
For an integer $d \geq 1$, let $\vect{a}$ and $\vect{\gamma}$ be two $d$-tuples of complex numbers 
\begin{align*}
\vect{a} = (a_1, \ldots, a_d) \\ 
\vect{\gamma} = (\gamma_1, \ldots, \gamma_d).
\end{align*}
Define a complex-exponent polynomial $p(z;\vect{a},\vect{\gamma})$ to be a function of the form
\[
p\colon L \rightarrow \mathbb{C}
\]
\[
z \mapsto \sum_{k=1}^d a_k z^{\gamma_k}
\]
which we abbreviate as $p(z)$.
\end{definition} 
Let $g(z) \colon \mathbb{C} \rightarrow \mathbb{C}$ be a holomorphic function.  
We assume that $g(z)$ has a simple zero $\alpha \neq 0$, and we write its expansion about $\alpha$ as  
\begin{equation} \label{general base}
g(z) = \sum_{k=1}^\infty c_k (z-\alpha)^k
\end{equation}
for some $c_k \in \mathbb{C}$ with $c_1 \neq 0$. We call $g(z)$ the ``base function". We fix an element of $L$ that corresponds to $\alpha$ and also denote it by $\alpha$. Then $g(z)$ also determines a function from a neighborhood $V$ of $\alpha$ in $L$ to $\mathbb{C}$, via the mapping 
\[
z \mapsto z^1 \in \mathbb{C} \text{ for } z \in V.
\]

Now fix an integer $d\geq 1$ and $\vect{\gamma} \in \mathbb{C}^d$. For an $\vect{a} \in \mathbb{C}^d$, define the function 
\begin{align*}
&f(z; \vect{a}, \vect{\gamma}, g) \colon L \rightarrow \mathbb{C} \\ 
&z \mapsto  g(z)+\sum_{i=1}^d a_i z^{\gamma_i}
\end{align*} 

Assume, for some neighborhood $U \subset \mathbb{C}^d$ of the origin $\vect{0}$, that there exists a smooth function 
\begin{align*}
&\phi(\vect{a}; \vect{\gamma}, g, \alpha) \colon U \rightarrow L \\ 
&\vect{a} \mapsto  \phi(\vect{a}; \vect{\gamma}, g, \alpha)
\end{align*} 
that satisfies for all $\vect{a} \in U$
\begin{align}
&f((\phi(\vect{a}; \vect{\gamma}, g, \alpha);  \vect{a}, \vect{\gamma}, g)) = 0 \label{zero}\\ 
& \phi(0; \vect{\gamma}, g, \alpha) = \alpha \label{phi zero}.
\end{align} 
Equations \eqref{zero} and \eqref{phi zero} are sufficient to determine the Taylor series coefficients of $\phi(\vect{a}; \vect{\gamma}, g, \alpha)$ about $\vect{0}$ in the variables $\vect{a}$.
 
Let $\vect{n}$ denote a $d$-tuple of non-negative integers 
\[
\vect{n} = (n_1, \ldots, n_d)
\]
and denote  
\[
\Sigma \vect{n} = \sum_{i=1}^d n_i.
\]
Let $\partial_{\vect{n}}$ denote the partial derivative operator
\[
\partial_{\vect{n}} = \prod_{i=1}^d (\frac{\partial}{\partial a_i})^{n_i},
\]
and for any function 
\begin{equation}\label{psi form}
\psi \colon : U \rightarrow \mathbb{C}
\end{equation}
denote
\[
\partial(\psi, \vect{n})=\partial_{\vect{n}} \psi(\vect{a}) |_{\vect{0}}.
\]

The Taylor series coefficient formula of Theorem 1.3, \cite{DeFranco} (stated as Theorem \ref{t main} here) is obtained using a base function $g(z)$ of the form 
\begin{equation} \label{2 term base}
g(z) = 1 + b z^\beta
\end{equation}
for non-zero $b, \beta \in \mathbb{C}$. As shown in \cite{DeFranco}, Birkeland \cite{Birkeland} and Sturmfels \cite{Sturmfels2000} essentially find the Taylor series coefficients using the base function \eqref{2 term base} with positive integer $\beta$ and with $\gamma_i$ distinct integers not equal to $\beta$, though they do not use that terminology or the methods of Taylor series. Their formulas 
may be expressed in terms of falling factorials. We generalize these formulas in \cite{DeFranco} by allowing $\gamma_i$ and $\beta$ to be arbitrary complex numbers $(\beta \neq 0)$ and show that the factorization is preserved using falling factorials (see Theorem \ref{t main} here). 

In this paper, we consider the base function \eqref{general base} and prove in Theorem \ref{t phi F} that $\partial(\phi, \vect{n})$ is a polynomial in $c_1^{-1}, c_i, 2 \leq i \leq \Sigma \vect{n}$, and that the coefficients of this polynomial factorize as well, using similar falling factorials. 

Now, the base function \eqref{2 term base} is a special case of \eqref{general base} obtained via 
\[
c_k = b {\beta \choose k} \alpha^{\beta-k}.
\]
Using this fact combined with Theorem \ref{t Taylor int} gives another proof of Theorem 4.1 in Sturmfels \cite{Sturmfels2000}. His proof of Theorem 4.1 uses Hensel's Lemma, and our proof of Theorem \ref{t Taylor int} counts terms in a derivative. 

We also note with base function \eqref{2 term base}, Theorem \ref{t phi F} here and  Theorem 1.3 of \cite{DeFranco} give two different formulas for the quantity $\partial(\phi, \vect{n})$. We include in section \ref{further} the objective of finding a direct proof of this identity. 


\section{Definitions and cited theorems}

\subsection{Definitions}

Most of the following definitions occur in \cite{DeFranco}.

First we present notation for multisets and multiset partitions. For an integer $M\geq 0$, we let $[1,M]$ denote 
\[
[1,M] = \{i \in \mathbb{Z} \colon 1 \leq i \leq M\}.
\]

\begin{definition}
For a positive integer $N$, define an ordered multiset $I$ of $[1,d]$ to be an $N$-tuple of integers 
\[
I = (I(1), \ldots, I(N))
\]
with $1 \leq I(i) \leq d$. We say that the order $|I|$ is $N$. We define the multiplicity $\#(n,I)$ of $n$ in $I$ as the number of indices $i$ such that $I(i)=n$. For a multiset $X$ we also denote the multiplicity of $n$ in $X$ as $\#(n,X)$. 

Let $\mathrm{Multiset}(d)$ denote the set of these ordered multisets. 

For a positive integer $k$, define a set partition $s$ of $[1,N]$ with $k$ parts to be a $k$-tuple
\[
s = (s_1, \ldots, s_k)
\]
where $s_i$ are pairwise disjoint non-empty subsets of $[1,N]$, 
\[
\bigcup_{i=1}^k s_i = [1,N],
\]
and 
\[
\min(s_i) < \min(s_j) \text{ for } i <j.
\]
We also write a set $s_i$ as an $m$-tuple
\[
s_i = (s_i(1), \ldots, s_i(m))
\]
where $m=|s_i|$ and 
\[
s_i(j) < s_i(l) \text{ for } j <l.
\]
Let $S(N,k)$ denote the set of such $s$. If $H$ is any finite set of integers, we similarly denote $S(H,k)$ to be the set of all set partitions of $H$ into $k$ non-empty parts.

For a multiset $I$ and a set partition $s \in S(|I|,k)$, define a multiset partition $J$ of $I$ with $k$ parts to be a $k$-tuple
 \[
 J = (J_1, \ldots, J_k)
 \]
 where $J \in \mathrm{Multiset}(d)$ is given by 
 \[
 J_i = (I(s_i(1)), \ldots, I(s_i(m)))
 \]
 where $m = |s_i|$. Thus the multiset partitions of $I$ with $k$ parts are in bijection with the set partitions in $S(|I|, k)$. 
Let $\mathrm{Parts}(I,k)$ denote the set of such multiset partitions $J$.  

Each part $J_i$ of a $J \in \mathrm{Parts}(I,k)$ is thus a multiset. For $J,J' \in  \mathrm{Parts}(I,k)$, we say that $J$ and $J'$ are equivalent if there exists a permutation $\sigma$ of $[1,k]$ such that
\[
\#( l,J_i) = \#(l,J_{\sigma(i)}')
\]
for each $1\leq l \leq d$.

 We let $ I(\hat{h})$ denote the ordered multiset obtained from $I$ by removing the element at the $h$-th index:
  \[
 I(\hat{h}) = (I(1), \ldots, I(h-1), I(h+1), \ldots,I(N)).
 \]
 We use the notation 
 \[
 \sum_{m \in I} \gamma_m = \sum_{i=1}^N \gamma_{I(i)}
 \]
where $N = |I|$.

For any function $\psi(\vect{a})$ of the form \eqref{psi form} and $I \in 
\mathrm{Multiset}(d)$, denote 
\[
\partial(\psi, I) = (\prod_{i=1}^{|I|} \frac{\partial}{\partial a_{I(i)}} ) \psi(\vect{a}) |_{\vect{0}}
\]
\end{definition} 
We also use the falling factorial applied to indeterminates, where ``indeterminate" refers to an arbitrary element of some polynomial ring over $\mathbb{Z}$. 
\begin{definition}
For an integer $k \geq 0$ and an indeterminate $x$, define the falling factorial 
\[
(x)_k = \prod_{i=1}^k(x-i+1).
\]
\end{definition}

Let $\mu$ denote an infinite sequence $(\mu_i)_{i=1}^\infty$ of non-negative integers $\mu_i$ such that $\mu_i=0$ for sufficiently large $i$. For integer $r \geq 1$, let
$C(r)$ denote the set of all such $\mu$ such that 
\[
\sum_{i\geq 1} \mu_i = r.
\]
Thus $C(r)$ is the set of compositions of $r$ with non-negative parts.

\subsection{Cited theorems}

These results are used here in Theorems \ref{t phi F}, \ref{t F prod}, and \ref{t phi transform}.

\begin{lemma} \label{fall identity}
For indeterminates $a$ and $b$ and an integer $n \geq 0$, 
\[
(a+b)_n = \sum_{i=0}^n {n \choose i} (a)_i(b)_{n-i}
\]
and equivalently 
\[
{a+b \choose n} = \sum_{i=0}^n {a \choose i} {b \choose n-i}.
\]
\end{lemma}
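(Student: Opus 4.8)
The plan is to prove the falling-factorial form and deduce the binomial form, since the two are equivalent after dividing by $n!$: using ${x \choose k} = (x)_k/k!$ and ${n \choose i} = n!/(i!(n-i)!)$, dividing
\[
(a+b)_n = \sum_{i=0}^n {n \choose i}(a)_i(b)_{n-i}
\]
by $n!$ turns the typical summand $\tfrac{1}{n!}{n \choose i}(a)_i(b)_{n-i}$ into ${a \choose i}{b \choose n-i}$ and the left side into ${a+b \choose n}$. So no separate argument for the binomial version is needed; it suffices to establish the displayed identity.

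The central observation is that both sides of that identity are polynomials in the indeterminates $a$ and $b$ with integer coefficients, of total degree $n$ (and degree at most $n$ in each variable separately), so it is enough to verify the identity at enough integer specializations and then invoke a Zariski-density / interpolation argument. First I would check that when $a$ and $b$ are replaced by non-negative integers, $(x)_k$ reduces to the ordinary falling product $x(x-1)\cdots(x-k+1)$ and the asserted identity becomes the classical Vandermonde convolution. That integer case has the standard combinatorial proof: count the $n$-element subsets of a set of $a+b$ objects split into a block of size $a$ and a block of size $b$, grouping by the number $i$ chosen from the first block (equivalently, compare the coefficient of $t^n$ on the two sides of $(1+t)^{a+b} = (1+t)^a(1+t)^b$ for integer exponents).

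Next I would upgrade this integer identity to an identity of polynomials. Let $D(a,b)$ denote the difference of the two sides. Fix any non-negative integer $b=\beta$; then $D(a,\beta)$ is a polynomial in the single variable $a$ that vanishes at every non-negative integer $a$, hence is the zero polynomial. Thus $D(a,\beta)=0$ for every value of $a$ and every non-negative integer $\beta$. Now fixing $a$ and viewing $D(a,b)$ as a polynomial in $b$, it vanishes at all non-negative integers $\beta$ and is therefore identically zero. Hence $D\equiv 0$ in $\mathbb{Z}[a,b]$, which is exactly the asserted identity for indeterminates.

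I do not expect a genuine obstacle: the only point requiring care is the logical structure of the two-variable interpolation, namely that agreement on $\mathbb{Z}_{\geq 0}\times\mathbb{Z}_{\geq 0}$ forces equality because each side has bounded degree in each variable, together with the remark that the falling-factorial and binomial forms are genuinely equivalent. An alternative route avoids interpolation entirely: work in the ring of formal power series over $\mathbb{Q}[a,b]$, use the coefficientwise polynomial identity $\sum_{k\geq 0}{x \choose k}t^k = (1+t)^x$ (each ${x\choose k}$ being a polynomial in $x$), multiply $(1+t)^a(1+t)^b = (1+t)^{a+b}$, and read off the coefficient of $t^n$ to obtain the binomial form directly.
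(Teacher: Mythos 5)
Your proof is correct. Note that the paper does not actually prove this lemma internally --- it simply cites Lemma 6.1 of \cite{DeFranco} --- so there is no in-paper argument to compare against. Your argument is the standard one for Chu--Vandermonde: the reduction between the falling-factorial and binomial forms by dividing by $n!$ is right, the combinatorial verification at non-negative integer specializations is right, and the two-variable interpolation step (a polynomial of bounded degree in each variable vanishing on $\mathbb{Z}_{\geq 0}\times\mathbb{Z}_{\geq 0}$ is identically zero) is carried out with the correct one-variable-at-a-time structure. The only mild caveat is in your ``alternative route'': the identity $(1+t)^a(1+t)^b=(1+t)^{a+b}$ for indeterminate exponents, with $(1+t)^x$ \emph{defined} as $\sum_k \binom{x}{k}t^k$, is coefficientwise exactly the statement being proved, so as written that route is circular unless you either define $(1+t)^x$ via $\exp(x\log(1+t))$ in $\mathbb{Q}[x][[t]]$ and verify the binomial expansion separately, or fall back on the same integer-specialization density argument. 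Since that is offered only as an aside and your primary argument is complete, the proof stands.
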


\begin{proof} 
This is Lemma 6.1 of \cite{DeFranco}.
\end{proof} 

\begin{lemma} \label{Newton series}
Suppose $F(x)$ is a polynomial of degree $m$. Then
\[
F(x) = \sum_{k=1}^{m+1} \frac{(x-1)_{k-1}}{(k-1)!}\sum_{r=0}^{k-1} (-1)^{k-1-r} {k-1 \choose r}F(r+1).
\]
\end{lemma}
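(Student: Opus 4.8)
The plan is to recognize this identity as Newton's forward-difference interpolation formula based at the point $1$, and to prove it by expanding $F$ in the basis of shifted binomial-coefficient polynomials. First I would introduce the forward-difference operator $\Delta$, defined on a function $G$ by $(\Delta G)(x) = G(x+1)-G(x)$, and verify by induction on $n$ that
\[
(\Delta^n G)(x) = \sum_{r=0}^n (-1)^{n-r}\binom{n}{r}G(x+r).
\]
Evaluating at $x=1$ shows that the inner sum appearing in the statement is exactly $(\Delta^{k-1}F)(1)$. Since $(x-1)_{k-1}/(k-1)! = \binom{x-1}{k-1}$ and writing $j=k-1$, the assertion to be proved becomes the compact identity
\[
F(x) = \sum_{j=0}^m \binom{x-1}{j}(\Delta^j F)(1).
\]

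Next I would observe that the polynomials $\binom{x-1}{j}$, for $j=0,\dots,m$, have pairwise distinct degrees $0,1,\dots,m$ and hence form a basis for the space of polynomials of degree at most $m$. Thus $F$ can be written uniquely as $F(x)=\sum_{j=0}^m c_j\binom{x-1}{j}$, and the entire problem reduces to showing $c_i = (\Delta^i F)(1)$ for each $i$. The key computational step is the action of $\Delta$ on this basis. By Lemma \ref{fall identity} with $a=x-1$, $b=1$, $n=j$, and using that $\binom{1}{j-i}$ vanishes unless $j-i\in\{0,1\}$, I obtain Pascal's rule for indeterminate argument, $\binom{x}{j}=\binom{x-1}{j}+\binom{x-1}{j-1}$, whence $\Delta\binom{x-1}{j}=\binom{x-1}{j-1}$. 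Iterating gives $\Delta^i\binom{x-1}{j}=\binom{x-1}{j-i}$, with the convention that a binomial coefficient with negative lower argument is $0$ (equivalently, differencing a degree-$j$ polynomial more than $j$ times yields $0$).

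Finally I would evaluate at $x=1$: with the stated convention one has $\Delta^i\binom{x-1}{j}|_{x=1}=\binom{0}{j-i}=\delta_{ij}$, so applying $\Delta^i$ term by term to the expansion of $F$ yields $(\Delta^i F)(1)=\sum_j c_j\,\delta_{ij}=c_i$. This identifies the coefficients and completes the proof. The only point requiring care — and the main potential obstacle — is the bookkeeping of conventions for binomial coefficients with negative or out-of-range lower index, and confirming that the needed Pascal recurrence is a genuine polynomial identity in the indeterminate $x$ rather than merely an identity at integers; this is precisely where invoking Lemma \ref{fall identity} makes the argument clean.
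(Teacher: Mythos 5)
Your argument is correct and complete. Note that the paper itself does not prove this lemma internally: its ``proof'' is a citation to Lemma 2.4 of \cite{DeFranco}, so there is no in-paper argument to compare against, and what you have supplied is a genuine self-contained proof of the cited result. Your route --- rewriting the inner sum as $(\Delta^{k-1}F)(1)$ for the forward-difference operator $\Delta$, recognizing $(x-1)_{k-1}/(k-1)! = \binom{x-1}{k-1}$, expanding $F$ in the basis $\binom{x-1}{j}$, $0 \le j \le m$ (legitimate since the degrees are distinct), and then pinning down the coefficients via $\Delta^i\binom{x-1}{j}\big|_{x=1}=\delta_{ij}$ --- is the standard proof of Newton's forward-difference interpolation formula and all the steps check out. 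In particular your derivation of the Pascal recurrence $\binom{x}{j}=\binom{x-1}{j}+\binom{x-1}{j-1}$ from Lemma \ref{fall identity} with $a = x-1$, $b=1$ correctly handles the vanishing of $\binom{1}{j-i}$ for $j-i\ge 2$, and you rightly flag and resolve the only delicate point, namely the convention for $\binom{x-1}{j-i}$ when $i>j$ (the $i$-th difference of a degree-$j$ polynomial is zero there). The one thing worth making explicit in a final write-up is the induction establishing $(\Delta^n G)(x)=\sum_{r=0}^n(-1)^{n-r}\binom{n}{r}G(x+r)$, which you correctly identify as routine.
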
 
\begin{proof} 
This is Lemma 2.4 of \cite{DeFranco}.
\end{proof} 

\begin{theorem} \label{t main}
With the above notation and base function \eqref{2 term base}, for $\Sigma \vect{n} \geq 1$,
\[
\partial_{\vect{n}} \phi(\vect{a}) |_{\vect{0}} = -\frac{\alpha^{1 + \sum_{i=1}^d n_i(\gamma_i-1)}}{g'(\alpha)^{\Sigma \vect{n}}} \prod_{i=1}^{\Sigma \vect{n} -1} (-1+i \beta - \sum_{i=1}^d n_i\gamma_i)
\]
where 
\[
g'(\alpha) = b \beta \alpha^{\beta -1}.
\]
\end{theorem}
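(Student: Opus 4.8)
The plan is to reduce the implicit equation to a clean fixed-point form and then apply Lagrange--Bürmann inversion. Write $N = \Sigma\vect n$ and $G = \sum_{i=1}^d n_i\gamma_i$. First I would normalize by the substitution $z = \alpha w$. Since $g(\alpha) = 1 + b\alpha^\beta = 0$ forces $b\alpha^\beta = -1$, the defining relation $f(\phi;\vect a,\vect\gamma,g) = 0$ becomes $w^\beta = 1 + \sum_{i=1}^d q_i w^{\gamma_i}$ with $q_i := a_i\alpha^{\gamma_i}$ and $w \to 1$ as $\vect a \to \vect 0$. Because $\partial/\partial a_i = \alpha^{\gamma_i}\,\partial/\partial q_i$ and $\phi = \alpha w$, the chain rule gives
\[
\partial_{\vect n}\phi|_{\vect 0} = \alpha^{1 + G}\,\partial^q_{\vect n} w|_{\vect 0},\qquad \partial^q_{\vect n} = \prod_{i=1}^d\Bigl(\tfrac{\partial}{\partial q_i}\Bigr)^{n_i}.
\]
This already isolates the factor $\alpha^{1 + \sum n_i\gamma_i}$, and the leftover scalar will be matched to $g'(\alpha)^{-N}$ at the very end using $g'(\alpha) = b\beta\alpha^{\beta-1}$ together with $b\alpha^\beta = -1$.

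Next I would put the equation into standard inversion form. Setting $u = w^\beta$ and $\eta_i = \gamma_i/\beta$ turns it into $u = 1 + \sum_i q_i u^{\eta_i}$; shifting $u = 1 + \xi$ yields the fixed-point equation $\xi = \psi(\xi)$ with $\psi(\xi) := \sum_{i=1}^d q_i(1+\xi)^{\eta_i}$, solved by the unique $\xi = \xi(\vect q)$ with $\xi(\vect 0) = 0$. The crucial structural feature is that $\psi$ is homogeneous of degree one in $\vect q$ and lies in the ideal generated by the $q_i$, so Lagrange--Bürmann applies as a formal identity in the $q_i$. Taking $w = (1+\xi)^{1/\beta} =: F(\xi)$, the inversion formula reads
\[
F(\xi(\vect q)) = F(0) + \sum_{k\geq 1}\frac{1}{k!}\left.\frac{d^{k-1}}{d\xi^{k-1}}\bigl[F'(\xi)\,\psi(\xi)^k\bigr]\right|_{\xi=0}.
\]

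The coefficient extraction is then immediate, and this is where the work condenses to a single line. Since $\psi(\xi)^k$ has total degree $k$ in $\vect q$, only the term $k = N$ contributes to the monomial $\prod_i q_i^{n_i}$; the multinomial theorem gives $[\,\textstyle\prod_i q_i^{n_i}\,]\,\psi(\xi)^N = \frac{N!}{\prod_i n_i!}(1+\xi)^{G/\beta}$, so that with $F'(\xi) = \tfrac1\beta(1+\xi)^{1/\beta - 1}$,
\[
\partial^q_{\vect n} w|_{\vect 0} = \Bigl(\textstyle\prod_i n_i!\Bigr)\,[\,\textstyle\prod_i q_i^{n_i}\,]\,w = \frac1\beta\left.\frac{d^{N-1}}{d\xi^{N-1}}(1+\xi)^{1/\beta - 1 + G/\beta}\right|_{\xi=0} = \frac1\beta\,(\rho)_{N-1},
\]
where $\rho = (1 + G - \beta)/\beta$ and I have used $\frac{d^{N-1}}{d\xi^{N-1}}(1+\xi)^\rho|_{\xi=0} = (\rho)_{N-1}$. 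Note how the $\beta$'s in the exponent recombine into exactly the kind of falling factorial appearing in the statement.

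Finally I would reconcile the two expressions. Factoring the $\beta$'s out of the falling factorial, $(\rho)_{N-1} = \beta^{-(N-1)}\prod_{i=1}^{N-1}(1 + G - i\beta)$, and combining with the prefactor $\alpha^{1+G}$ from the first step gives
\[
\partial_{\vect n}\phi|_{\vect 0} = \frac{\alpha^{1+G}}{\beta^N}\prod_{i=1}^{N-1}(1 + G - i\beta).
\]
Writing each factor as $1 + G - i\beta = -(-1 + i\beta - G)$ produces a sign $(-1)^{N-1}$, while the identity $g'(\alpha)^N = (-1)^N\beta^N\alpha^{-N}$ (from $g'(\alpha) = b\beta\alpha^{\beta-1}$ and $b\alpha^\beta = -1$) shows that $-\alpha^{1+\sum n_i(\gamma_i-1)}/g'(\alpha)^N = (-1)^{N+1}\alpha^{1+G}/\beta^N$; since $(-1)^{N-1} = (-1)^{N+1}$, the two sides agree and the stated formula follows. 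The main obstacle I anticipate is one of rigor rather than computation: justifying that the Lagrange--Bürmann identity is legitimate here, where the exponents $\gamma_i$ and $\beta$ are arbitrary complex numbers and $w,\phi$ live on the Riemann surface $L$. One must read $(1+\xi)^{\eta_i}$ and $(1+\xi)^{1/\beta}$ as the principal branches near $\xi = 0$, interpret every manipulation as a formal power series in $\vect q$ with holomorphic-in-$\xi$ coefficients, and then transfer the formal identity to the genuine analytic $\phi$ once the fixed-point series is known to converge on the neighborhood $U$ guaranteed in the set-up.
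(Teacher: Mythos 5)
Your argument is correct --- I checked the normalization $z=\alpha w$, the reduction to the fixed-point equation $\xi=\psi(\xi)$, the coefficient extraction (only $k=\Sigma\vect n$ survives by homogeneity), and the final sign/power bookkeeping against $g'(\alpha)=-\beta/\alpha$; everything matches, and the formula checks out at $\Sigma\vect n=1$ and $2$. But note that this paper does not prove Theorem \ref{t main} at all: it is imported verbatim as Theorem 1.3 of \cite{DeFranco}, where (as in the proof of Theorem \ref{t phi F} here) the argument is an induction on $\Sigma\vect n$ obtained by differentiating $f(\phi)=0$ directly, organizing the higher derivatives over multiset partitions, and collapsing the resulting sums with falling-factorial identities such as Lemma \ref{fall identity} and Theorem \ref{t nu}. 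Your route is genuinely different: it is essentially Birkeland's Lagrange-inversion method, which the introduction explicitly contrasts with the paper's ``Taylor series directly'' approach. What your method buys is brevity and transparency --- the falling factorial $(\rho)_{\Sigma\vect n-1}$ appears in one line as a derivative of $(1+\xi)^{\rho}$ rather than emerging from a combinatorial resummation. What it costs is generality: the inversion step depends on solving $w^\beta=1+\sum_i q_iw^{\gamma_i}$ for $w$ as an explicit function $(1+\xi)^{1/\beta}$ of the perturbation, which works only because the base function has the special two-term form \eqref{2 term base}; the paper's inductive machinery is built to handle the general base function \eqref{general base} of Theorem \ref{t phi F}, where no such closed-form inversion exists. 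Your closing caveat about rigor is the right one to flag, and your proposed resolution is sound: all manipulations live in $\mathbb{C}[[q_1,\dots,q_d]]$ with coefficients holomorphic in $\xi$ near $0$ (principal branches), and since equations \eqref{zero}--\eqref{phi zero} determine the Taylor coefficients of the analytic $\phi$ uniquely, the formal solution must agree with it.
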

\begin{proof} 
This is Theorem 1.3 of \cite{DeFranco}.
\end{proof} 

 \begin{theorem} \label{t deriv set}
 Let $R$ be a commutative ring and let $\delta \colon R \rightarrow R$ be a derivation. 
 For an integer $M\geq 0$ and elements $f_A, f_B, f_i \in R, 1 \leq i \leq M$, 
 \begin{equation}\label{deriv set}
\sum_{ w \subset [1,M] }\delta^{|w^c|-1} (f_A^{(1)} \prod_{i \in w^c} f_i ) \delta^{|w|} (f_B \prod_{i \in w} f_i )  = \delta^M (f_A f_B \prod_{i=1}^M f_i)
  \end{equation}
  where $w^c$ denotes the complement of $w$ in $[1,M]$; $\displaystyle   f_A^{(1)}$ denotes $\delta f_A$; and in the case $w^c$ is empty, $\displaystyle   \delta^{-1} f_A^{(1)}$ denotes $f_A$.
 \end{theorem}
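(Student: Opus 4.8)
The plan is to expand both sides into a common basis of monomials in the elementary derivatives $\delta^{j_A}(f_A)$, $\delta^{j_B}(f_B)$, $\delta^{j_i}(f_i)$ and to match scalar coefficients. Applying the generalized Leibniz rule to the right-hand side, the coefficient of $\delta^{j_A}(f_A)\,\delta^{j_B}(f_B)\prod_{i=1}^M \delta^{j_i}(f_i)$, for exponents with $j_A+j_B+\sum_i j_i=M$, is the multinomial $\binom{M}{j_A,j_B,j_1,\dots,j_M}$. On the left-hand side I would write $A=w^c$ and $B=w$, so each term groups $f_A$ (through $f_A^{(1)}=\delta f_A$) with the factors indexed by $A$ and $f_B$ with those indexed by $B$. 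Expanding the two inner derivatives by the generalized Leibniz rule, the presence of $f_A^{(1)}$ forces at least one derivative onto $f_A$, so only monomials with $j_A\ge 1$ arise from terms with $A\neq\emptyset$; the single term $A=\emptyset$, where the $\delta^{-1}$ convention returns $f_A$ undifferentiated, produces exactly the monomials with $j_A=0$ and matches the right-hand side directly.

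For $j_A\ge 1$ I would collect, for a fixed target monomial, the contributions over all admissible partitions $[1,M]=A\sqcup B$. A factor $f_i$ placed in $A$ (resp.\ $B$) contributes $j_i$ to that block's derivative count, and the two Leibniz constraints read $j_A+\sum_{i\in A}j_i=|A|$ and $j_B+\sum_{i\in B}j_i=|B|$; the first is equivalent to $\sum_{i\in A}(1-j_i)=j_A$. After multiplying both coefficients through by $(j_A-1)!\,j_B!\,\prod_i j_i!$, matching them reduces to the purely combinatorial identity
\[
\sum_{\substack{A\subseteq[1,M]\\ \sum_{i\in A}(1-j_i)=j_A}} (|A|-1)!\,(M-|A|)! \;=\; \frac{M!}{j_A}.
\]

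The main obstacle is this identity, which I would prove by the cycle lemma. Set $m_i=1-j_i\le 1$ and note $\sum_i m_i=M-\sum_i j_i=j_A+j_B$, so that $1\le j_A\le\sum_i m_i$. Because every step $m_i$ is at most $+1$, the prefix sums of the sequence $(m_{\pi(1)},\dots,m_{\pi(M)})$ attached to any permutation $\pi$ of $[1,M]$ start at $0$ and cannot jump over the level $j_A$ on the way up to $\sum_i m_i\ge j_A$; hence each $\pi$ has a well-defined first passage to $j_A$, whose prefix index set $A$ satisfies $\sum_{i\in A}m_i=j_A$. By the cycle lemma, among the $|A|!$ orderings of a fixed block $A$ exactly $j_A\,(|A|-1)!$ have all proper prefix sums strictly below $j_A$ (equivalently, reach $j_A$ only at the final step); multiplying by the $(M-|A|)!$ orderings of the complement and summing over all admissible $A$ counts each of the $M!$ permutations exactly once, which gives $\sum_A j_A\,(|A|-1)!\,(M-|A|)!=M!$, the desired identity. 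I would also record the base case $M=0$ and check the $\delta^{-1}$ bookkeeping. A natural alternative is induction on $M$, peeling off $f_M$ and invoking the statement for $M-1$ with $f_Bf_M$ in place of $f_B$; this is quicker to set up but reduces to an ``exchange'' identity asserting that moving $f_M$ between the two blocks leaves the size-weighted sum unchanged, which appears to need the same combinatorial input, so I would pursue the coefficient-matching argument above.
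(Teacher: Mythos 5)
Your proposal is correct, but note that this paper does not actually prove Theorem \ref{t deriv set}: its ``proof'' is the single line ``This is Theorem 4.3 of \cite{DeFranco}'', so there is no in-paper argument to compare yours against, and you have supplied a self-contained proof where the paper only supplies a citation. Your coefficient-matching reduction is sound: by the generalized Leibniz rule the target monomial $\delta^{j_A}(f_A)\delta^{j_B}(f_B)\prod_i\delta^{j_i}(f_i)$ (with $j_A+j_B+\sum_i j_i=M$) has coefficient $\binom{M}{j_A,j_B,j_1,\dots,j_M}$ on the right, the $j_A=0$ monomials come only from the $w^c=\emptyset$ term via the $\delta^{-1}$ convention, and for $j_A\ge 1$ the two block constraints $j_A+\sum_{i\in A}j_i=|A|$ and $j_B+\sum_{i\in B}j_i=|B|$ are indeed equivalent under the global constraint, so after clearing factorials everything rests on
\[
\sum_{\substack{A\subseteq[1,M]\\ \sum_{i\in A}(1-j_i)=j_A}}(|A|-1)!\,(M-|A|)!=\frac{M!}{j_A}.
\]
Your cycle-lemma proof of this identity is valid: since each $m_i=1-j_i\le 1$ and $\sum_i m_i=j_A+j_B\ge j_A\ge 1$, every permutation has a unique first passage of its prefix-sum walk to level $j_A$, the resulting prefix set is admissible, and the cycle lemma (applied to the reversed sequence, where ``all proper prefix sums below $j_A$'' becomes ``all partial sums positive'') gives exactly $j_A(|A|-1)!$ valid orderings of each admissible block, whence $\sum_A j_A(|A|-1)!(M-|A|)!=M!$. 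The two small points worth making explicit in a final write-up are the reversal step in invoking the cycle lemma and the observation that for $j_A\ge1$ the admissibility condition automatically excludes $A=\emptyset$, so the degenerate term is never double-counted; with those noted, the argument is complete.
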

\begin{proof} 
This is Theorem 4.3 of \cite{DeFranco}.
\end{proof}

\begin{theorem} \label{t nu}
For integers $1 \leq k \leq N$, an indeterminate $\nu$, and $N$ indeterminates $x_i$, $1 \leq i \leq N$,
\begin{align}
&\frac{1}{(k-1)!}\sum_{r=0}^{k-1} (-1)^{k-1-r}{k-1 \choose r} ((r+1)\nu-1+\sum_{i=1}^N x_i)_{N-1} \label{nu 1}\\
=&\nu^{k-1}\sum_{s \in S(N,k)} \prod_{i=1}^{k} (\nu-1+\sum_{m\in s_i} x_m)_{|s_i|-1}. \label{nu 2}
\end{align}
\end{theorem}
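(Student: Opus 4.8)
My plan has two stages: use Lemma~\ref{Newton series} to turn \eqref{nu 1} into a single polynomial identity, and then prove that identity by a combinatorial argument. Write $X=\sum_{i=1}^N x_i$ and regard $F(x)=(x\nu-1+X)_{N-1}$ as a polynomial in $x$ of degree $N-1$ with coefficients in the polynomial ring over $\mathbb Z$ generated by $\nu$ and the $x_i$. Since $F(r+1)=((r+1)\nu-1+X)_{N-1}$, Lemma~\ref{Newton series} identifies the expression \eqref{nu 1} as precisely the coefficient of $(x-1)_{k-1}$ in the expansion of $F$ in the basis $\{(x-1)_{j-1}\}_{j\ge 1}$. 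Because that basis is unitriangular against the monomial basis, equating coefficients shows that Theorem~\ref{t nu} is equivalent to the one polynomial identity
\[
(x\nu-1+\textstyle\sum_{i=1}^N x_i)_{N-1}=\sum_{k=1}^N (x-1)_{k-1}\,\nu^{k-1}\sum_{s\in S(N,k)}\prod_{i=1}^k\big(\nu-1+\textstyle\sum_{m\in s_i}x_m\big)_{|s_i|-1}. \qquad (\star)
\]
I would prove $(\star)$ in place of \eqref{nu 1}$=$\eqref{nu 2}.

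Both sides of $(\star)$ are polynomials in $x$ of degree at most $N-1$, so it suffices to verify equality at the $N$ values $x=t$, $t=1,\dots,N$ (in fact for every positive integer $t$). For integer $t$ one has $(t-1)_{k-1}=(k-1)!\binom{t-1}{k-1}$; since the blocks of a set partition are distinct sets, multiplying the $S(N,k)$-summand by $k!$ converts the sum over set partitions into a sum over ordered partitions, i.e.\ over surjections onto a $k$-subset of slots. Using $\tfrac1k\binom{t-1}{k-1}=\tfrac1t\binom tk$ and absorbing one factor $\nu$ into each block, the right side of $(\star)$ at $x=t$ becomes $\tfrac{1}{\nu t}$ times a sum over \emph{all} maps $\rho\colon[1,N]\to[1,t]$. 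Thus $(\star)$ at $x=t$ is equivalent to the multivariate Abel--Hurwitz-type identity
\[
\sum_{\rho\colon[1,N]\to[1,t]}\ \prod_{\substack{1\le j\le t\\ \rho^{-1}(j)\ne\emptyset}}\nu\big(\nu-1+\textstyle\sum_{m\in\rho^{-1}(j)}x_m\big)_{|\rho^{-1}(j)|-1}=\nu t\,(t\nu-1+\textstyle\sum_{i=1}^N x_i)_{N-1}. \qquad (\star\star)
\]

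I would prove $(\star\star)$ by induction on $t$. Writing $V_t(H)$ for the left side attached to an arbitrary label set $H$, with the conventions $u(\emptyset)=1$ and $V_t(\emptyset)=1$ where $u(T)=\nu(\nu-1+\sum_{m\in T}x_m)_{|T|-1}$, classifying a map by the block sent to the last slot gives the subset-convolution recursion $V_t(H)=\sum_{T\subseteq H}u(T)\,V_{t-1}(H\setminus T)$, with base case $V_1(H)=u(H)$, which already matches the right side of $(\star\star)$ at $t=1$. Substituting the inductive hypothesis $V_{t-1}(K)=\nu(t-1)\big((t-1)\nu-1+\sum_{m\in K}x_m\big)_{|K|-1}$ reduces the step to the purely falling-factorial convolution obtained by equating $\sum_{T\subseteq H}u(T)V_{t-1}(H\setminus T)$ with $\nu t\,(t\nu-1+\sum_{m\in H}x_m)_{|H|-1}$.

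This last convolution is the crux and the step I expect to be hardest: one must show that a sum over all subsets $T\subseteq H$ of a product of two falling factorials, one in $\sum_{m\in T}x_m$ and one in $\sum_{m\in H\setminus T}x_m$, collapses to a single falling factorial in the \emph{total} sum $\sum_{m\in H}x_m$ (with the boundary terms $T=\emptyset$ and $T=H$ handled by the conventions above). This collapse is genuinely multivariate and does not follow from a single application of Lemma~\ref{fall identity}; I would attack it by a secondary induction on $|H|$, peeling off one variable $x_{m_0}$, applying Lemma~\ref{fall identity} to detach $x_{m_0}$ from each factor in which it appears, and matching the coefficient of each power $(x_{m_0})_q$ against the lower case of the identity. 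As a cleaner alternative that I suspect underlies the intended proof, one can use the representation $(y)_m=\big[\tfrac{d^m}{dz^m}z^{y}\big]_{z=1}$: with $\delta=d/dz$ and $f_m=z^{x_m}$, the factor $u(T)$ is $\big[\delta^{|T|-1}((z^{\nu})'\prod_{m\in T}f_m)\big]_{z=1}$, so Theorem~\ref{t deriv set} (taking $f_A=z^{\nu}$, $f_B=z^{b}$) yields exactly the one-root-block peeling identity $\sum_{w\subseteq[1,N]}\nu(\nu-1+\sum_{m\notin w}x_m)_{N-|w|-1}(b+\sum_{m\in w}x_m)_{|w|}=(\nu+b+\sum_{i}x_i)_N$, whose $\delta^{|w^c|-1}$ matches precisely the $|s_i|-1$ in \eqref{nu 2}; organizing the convolution around this peeling should give $(\star\star)$ and bypass the coefficient-matching computation.
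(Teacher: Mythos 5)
The paper does not actually prove Theorem~\ref{t nu} here; it is imported verbatim as Theorem~4.1 of \cite{DeFranco}, so your argument has to stand on its own. Most of it does: the reduction via Lemma~\ref{Newton series} and uniqueness of the expansion in the basis $\{(x-1)_{k-1}\}$ to the single identity $(\star)$ is correct; so is the evaluation at $x=t$, the surjection count $(k-1)!\binom{t-1}{k-1}=\tfrac{k!}{t}\binom{t}{k}$ converting $(\star)$ into the identity $(\star\star)$; and so is the recursion $V_t(H)=\sum_{T\subseteq H}u(T)V_{t-1}(H\setminus T)$ with its base case. You also correctly isolate the crux: with $u_C(T)=C\bigl(C-1+\sum_{m\in T}x_m\bigr)_{|T|-1}$ and $u_C(\emptyset)=1$, the inductive step is exactly the subset-convolution (multivariate Hurwitz) identity $\sum_{T\subseteq H}u_{\nu}(T)\,u_{(t-1)\nu}(H\setminus T)=u_{t\nu}(H)$.

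That crux is where the genuine gap is: neither of your two sketches proves it. The ``cleaner alternative'' does not apply as stated. Taking $f_A=z^{\nu}$, $f_B=z^{b}$, $f_i=z^{x_i}$ in Theorem~\ref{t deriv set} and evaluating at $z=1$ gives
\[
\sum_{w\subseteq[1,N]}\nu\Bigl(\nu-1+\sum_{m\in w^c}x_m\Bigr)_{|w^c|-1}\Bigl(b+\sum_{m\in w}x_m\Bigr)_{|w|}=\Bigl(\nu+b+\sum_{i}x_i\Bigr)_{N},
\]
in which only the $f_A$-block carries the Abel-type shift; the second factor is $\bigl(b+\sum_{m\in w}x_m\bigr)_{|w|}$, not the $b\bigl(b-1+\sum_{m\in w}x_m\bigr)_{|w|-1}$ your inductive step needs, and no choice of $b$ repairs this since $\bigl(b+\sigma\bigr)_{|w|}=(b+\sigma)\bigl(b-1+\sigma\bigr)_{|w|-1}$ has the subset-dependent leading factor $b+\sigma$ rather than $b$. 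Your other route, a secondary induction matching coefficients of $(x_{m_0})_q$ via Lemma~\ref{fall identity}, is not carried out, and as posed it would force a strengthened induction hypothesis (you would need the analogous sums with arbitrary shifts $\bigl(A-1+\sigma_T\bigr)_{|T|-q}$, not just $q=1$), so it is not a routine verification. The gap is fillable from your own ingredients: write $\bigl(b+\sigma_w\bigr)_{|w|}=b\bigl(b-1+\sigma_w\bigr)_{|w|-1}+\sum_{m\in w}x_m\bigl((b-1+x_m)+\sigma_{w\setminus\{m\}}\bigr)_{|w\setminus\{m\}|}$, apply the displayed one-block identity on $H\setminus\{m\}$ with plain parameter $b-1+x_m$ to evaluate the correction term as $\bigl(\sum_{m}x_m\bigr)\bigl(\nu+b-1+\sum_i x_i\bigr)_{N-1}$, and subtract; this yields $\bigl(\nu+b\bigr)\bigl(\nu+b-1+\sum_i x_i\bigr)_{N-1}$ and closes your induction on $t$ with $b=(t-1)\nu$. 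Until that step is written, the proof is incomplete at its hardest point.
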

\begin{proof} 
This is Theorem 4.1 of \cite{DeFranco}.
\end{proof}

\section{Taylor coefficient theorem}

\begin{definition}
Let $\alpha$ be the zero of base function \eqref{general base}. For an $x \in \mathbb{C}$ and integers $r \geq 0$ and $a \geq 1$, define
\begin{align*}
&F(x,r,a) =  \frac{-\alpha^{x}}{(a-1)!} \sum_{\mu \in C(r)} (-1)^{\mu_1} {x \choose r-\sum_{i\geq 2} (i-1)\mu_i-(a-1)}  (r+ \sum_{i \geq 2} \mu_i)! \\ 
& \times \frac{  \alpha^{-(r-\sum_{i\geq 2} (i-1)\mu_i-(a-1))}
}{ c_1^{r+1+\sum_{i \geq 2}\mu_2}}\prod_{i\geq 2}\frac{c_i^{\mu_i}}{\mu_i!}
\end{align*}
and also
\[
F(x,-1,a) =0.
\]
\end{definition} 

\begin{theorem}\label{t phi F}
With the above notation and base function \eqref{general base}, and for an $I \in \mathrm{Multiset}(d)$ with $|I|\geq 1$, 
\begin{equation}\label{phi F}
\partial(\phi,I) = F(\sum_{m \in I} \gamma_m, |I|-1, 1). 
\end{equation}
\end{theorem}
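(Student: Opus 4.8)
The plan is to differentiate the defining relation \eqref{zero}, written as
\[
g(\phi(\vect{a})) = -\sum_{i=1}^d a_i\,\phi(\vect{a})^{\gamma_i},
\]
by the operator $\prod_{h=1}^{|I|}\partial/\partial a_{I(h)}$, evaluate at $\vect{0}$, and then solve for $\partial(\phi,I)$ by induction on $N=|I|$. On the right-hand side, since each $a_i$ enters linearly and $a_i|_{\vect 0}=0$, the only surviving terms are those in which exactly one derivative lands on the explicit factor $a_{I(h)}$, giving
\[
\Big(\prod_{h=1}^{N}\tfrac{\partial}{\partial a_{I(h)}}\Big)\Big(\sum_i a_i\phi^{\gamma_i}\Big)\Big|_{\vect 0}=\sum_{h=1}^{N}\partial\big(\phi^{\gamma_{I(h)}},\,I(\hat h)\big).
\]
On the left-hand side, Fa\`a di Bruno applied to $g\circ\phi$, together with $g^{(k)}(\alpha)=k!\,c_k$ from \eqref{general base}, gives a sum over set partitions $s\in S(N,k)$ of products $\prod_j\partial(\phi,I_{s_j})$. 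Isolating the single-block term $k=1$, whose coefficient is $g'(\alpha)=c_1$, yields the recursion
\[
c_1\,\partial(\phi,I)=-\sum_{h=1}^{N}\partial\big(\phi^{\gamma_{I(h)}},I(\hat h)\big)-\sum_{k=2}^{N}k!\,c_k\sum_{s\in S(N,k)}\prod_{j=1}^{k}\partial(\phi,I_{s_j}),
\]
whose base case $N=1$ already reproduces $\partial(\phi,(j))=-\alpha^{\gamma_j}/c_1=F(\gamma_j,0,1)$.

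I would then expand each factor $\partial(\phi^{\gamma},J)$ by Fa\`a di Bruno as well, using $\tfrac{d^l}{dy^l}y^\gamma=(\gamma)_l\,y^{\gamma-l}$, so that
\[
\partial(\phi^{\gamma},J)=\sum_{l\ge 0}(\gamma)_l\,\alpha^{\gamma-l}\sum_{s\in S(|J|,l)}\prod_{j}\partial(\phi,J_{s_j}),
\]
and substitute the inductive hypothesis $\partial(\phi,J)=F(\sum_{m\in J}\gamma_m,|J|-1,1)$ for every proper sub-multiset on the right. After this substitution the whole right-hand side becomes a sum, indexed by set partitions, of products of the per-block $F$-values. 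A block of size $b$ contributes a binomial in its block sum $\sum_{m\in\text{block}}\gamma_m$, a power of $\alpha$, a factorial, and a $c_i$-monomial carrying $c_1^{-(b+\cdots)}$; multiplying across a partition into $k$ parts produces an overall $c_1^{-(N+\cdots)}$ and a $c_i$-monomial that I would record as a composition $\mu\in C(N-1)$.

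The crux, and the step I expect to be the main obstacle, is resumming these set-partition sums into the single composition sum defining $F(x,N-1,1)$ with $x=\sum_{m\in I}\gamma_m$. Two collapses must be carried out at once. First, the $\gamma$-dependence enters only through falling factorials and binomials of the block sums, and these must be convolved across blocks into a single $\binom{x}{\cdot}$ and prefactor $\alpha^{x}$; this is exactly the Vandermonde-type identity of Lemma~\ref{fall identity}. Second, the sum over partitions into $k$ parts of the product of falling factorials of block sums must collapse to a finite-difference expression in one variable, which is precisely Theorem~\ref{t nu} read from its partition side \eqref{nu 2} to its finite-difference side \eqref{nu 1}, with the number of parts $k$ matching the auxiliary parameter $a$ in $F(x,r,a)$ (note the shared $1/(k-1)!$ versus $1/(a-1)!$). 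The resulting finite-difference form is then identified with the binomial form in the definition of $F$ via Newton's expansion, Lemma~\ref{Newton series}. I would invoke Theorem~\ref{t deriv set} to organize the interaction between the derivative peeled off onto the explicit factor $a_{I(h)}$ and the remaining Leibniz expansion, so that the recursion closes cleanly under this resummation.

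Finally I would check the scalar bookkeeping: the number of blocks controls the power of $c_1^{-1}$ (one inverse factor per $\partial(\phi,\cdot)$), the multiplicities of repeated block sizes produce the $\prod_{i\ge 2}\mu_i!$ denominators and the $(r+\sum_{i\ge 2}\mu_i)!$ numerator, and the sign $(-1)^{\mu_1}$ together with the global $-\alpha^{x}$ come from the $k=1$ isolation and from $(\gamma)_l$. The fact that the answer depends on $I$ only through $x=\sum_{m\in I}\gamma_m$ and $N=|I|$ is then an output rather than an input, emerging once the per-block binomials and falling factorials have been convolved by Lemma~\ref{fall identity} and Theorem~\ref{t nu}. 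What remains is the careful matching of indices in those two identities, which is lengthy but routine.
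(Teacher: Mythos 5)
Your skeleton matches the paper's: induction on $|I|$, differentiate the defining relation, expand $g\circ\phi$ and $\phi^{\gamma}$ by Fa\`a di Bruno, isolate the $k=1$ block carrying $c_1\,\partial(\phi,I)$, substitute the inductive hypothesis into every proper block, and then resum. The base case and the recursion are exactly as in the paper.

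The gap is at the step you yourself flag as the crux. You propose to collapse the set-partition sums of products of per-block $F$-values via Theorem~\ref{t nu} (read from side \eqref{nu 2} to side \eqref{nu 1}) together with Lemma~\ref{Newton series}. But Theorem~\ref{t nu} applies only when each block contributes a single shifted falling factorial $(\nu-1+\sum_{m\in s_i}x_m)_{|s_i|-1}$ with a common auxiliary $\nu$ --- which is what happens for the two-term base function \eqref{2 term base}, where $\partial(\phi,J)$ is given by the closed product of Theorem~\ref{t main}. For the general base function \eqref{general base}, each block's value $F(\sum_{m\in J_i}\gamma_m,|J_i|-1,1)$ is itself a sum over compositions carrying $c_i$-monomials, and the product over blocks convolves both the $\gamma$-variables \emph{and} the compositions. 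The identity actually needed is the product formula
\[
\sum_{s \in S(M, a)} \prod_{i=1}^a F\Bigl(\sum_{m \in s_i} x_m, |s_i|-1, 1\Bigr) = F\Bigl(\sum_{i=1}^{M} x_i, M-1, a\Bigr),
\]
which is Theorem~\ref{t F prod} of the paper; it is not a consequence of Theorem~\ref{t nu} and requires its own induction, proved there by extracting coefficients of $c_i$-monomials and $x$-monomials and reducing to a generating-function identity in $t$ that follows from Theorem~\ref{t deriv set}. Your invocation of Theorem~\ref{t deriv set} is aimed at the wrong place (organizing the $h$-sum rather than proving this product collapse), and Lemma~\ref{Newton series} plays no role in this theorem (it is used for the transformation rule, Theorem~\ref{t phi transform}). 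Once the product formula is in hand, the rest of your outline is sound: Lemma~\ref{fall identity} does perform the $k$-sum, and the final cancellation is the routine coefficient check on the $c_i$-monomials.
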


\begin{proof}
We use induction on $|I|$. When $I=(i)$, we differentiate equation \eqref{zero} with respect to $a_i$ and evaluate at $\vect{a}=0$ to obtain
\[
0=\alpha^{\gamma_i} + c_1 \partial(\phi,I).
\]
Solving for $\partial(\phi,I)$ proves the base case of $|I|=1$. 

Given an $I$ with $|I|\geq 1$, suppose equation \eqref{phi F} is true for all $I'$ with $|I'|< |I|$. Given any function $\psi(\vect{a})$ of the form $\eqref{psi form}$,
it follows from the definitions that
\begin{align}
\partial(f \circ \psi,I)  &= \sum_{h=1}^{|I|} \sum_{k=1}^{|I|-1} (\gamma_{I(h)})_k\psi(\vect{0})^{\gamma_{I(h)}-k}\sum_{J \in \mathrm{Parts}(I(\hat{h}), k)}\prod_{i=1}^k \partial(\psi,J_i)) \label{psi f} \\ 
&+  \sum_{k=1}^{|I|} g^{(k)}(\psi(\vect{0})) \sum_{J \in \mathrm{Parts}(I, k)}\prod_{i=1}^k \partial(\psi,J_i). \label{psi g}
\end{align}

Setting $\psi$ to be $\phi$, we obtain 

\begin{align}
0 = & \sum_{h=1}^{|I|} \sum_{k=1}^{|I|-1} (\gamma_{I(h)})_k\alpha^{\gamma_{I(h)}-k}\sum_{J \in \mathrm{Parts}(I(\hat{h}), k) }\prod_{i=1}^k \partial(\phi,J_i) \label{phi f}\\ 
&+  \sum_{k=2}^{|I|} k!c_k \sum_{J \in \mathrm{Parts}(I, k)}\prod_{i=1}^k \partial(\phi,J_i) \label{k c} \\ 
&+c_1 \partial(\phi,I) \label{phi I}.
\end{align}
Using the induction hypothesis we may express the sum in the right side of the equation at  line \ref{phi f} as 
\[
 \sum_{h=1}^{|I|} \sum_{k=1}^{|I|-1} (\gamma_{I(h)})_k\alpha^{\gamma_{I(h)}-k}\sum_{J \in \mathrm{Parts}(I(\hat{h}), k) }\prod_{i=1}^k F(\sum_{m \in J_i}x_m, |J_i|-1,1).
\] 
To the sum over $J$ we apply Theorem \ref{t F prod} and obtain 
\[
 \sum_{h=1}^{|I|} \sum_{k=1}^{|I|-1} (\gamma_{I(h)})_k\alpha^{\gamma_{I(h)}-k} F(\sum_{m \in I(\hat{h})}x_m, |I|-2,k).
\]
Substituting in the definition of $F(x,r,a)$ and then using 
\[
(\gamma_{I(h)})_k = (\gamma_{I(h)}) (\gamma_{I(h)}-1)_{k-1}
\]
yields
\begin{align*}
&\sum_{h=1}^{|I|} -\alpha^{\sum_{i \in I} \gamma_i} \gamma_{I(h)}\sum_{k=1}^{|I|-1} \frac{(\gamma_{I(h)}-1)_{k-1}}{(k-1)!}  \sum_{\mu \in C(|I|-2)} (-1)^{\mu_1} {\sum_{i \in I(\hat{h})} \gamma_i  \choose |I|-2-\sum_{i\geq 2} (i-1)\mu_i-(k-1)}  \\
& \times(|I|-2+ \sum_{i \geq 2} \mu_i)! 
 \frac{  \alpha^{-(|I|-1-\sum_{i\geq 2} (i-1)\mu_i))}
}{ c_1^{|I|-1+\sum_{i \geq 2}\mu_2}}\prod_{i\geq 2}\frac{c_i^{\mu_i}}{\mu_i!}  .
\end{align*}
Interchanging the sum over $k$ and $\mu$ and applying Lemma \ref{fall identity} gives 
\begin{align*}
&\sum_{h=1}^{|I|} -\alpha^{\sum_{i \in I} \gamma_i} \gamma_{I(h)}\sum_{\mu \in C(|I|-2)} (-1)^{\mu_1} \frac{(-1+\sum_{i \in I} \gamma_i)_{|I|-2-\sum_{i\geq 2} (i-1)\mu_i}}{(|I|-2-\sum_{i\geq 2} (i-1)\mu_i)!} 
\end{align*}
and summing over $h$ gives 
\begin{align}
-\alpha^{\sum_{i \in I} \gamma_i}\sum_{\mu \in C(|I|-2)} (-1)^{\mu_1} \frac{(\sum_{i \in I} \gamma_i)_{|I|-1-\sum_{i\geq 2} (i-1)\mu_i}}{(|I|-2-\sum_{i\geq 2} (i-1)\mu_i)!}. \label{summed h} 
\end{align}

Now to each term at line \eqref{k c}, we again apply the induction hypothesis and Theorem \ref{t F prod}, and at line \eqref{phi I} we assume the result
\[
 \partial(\phi,I) = F(\sum_{m\in I}x_m, |I|-1,1).
\]
Now combining the result \eqref{summed h} it is sufficient to prove

\begin{align} 
0=&-\sum_{\mu \in C(|I|-2)} \frac{(x)_{|I|-1- \sum_{i \geq 2 }(i-1)\mu_i}}{(|I|-2- \sum_{i \geq 2 }(i-1)\mu_i)!}(-1)^{\mu_1}\frac{(|I|-2+\sum_{i \geq 2} \mu_i)!}{c_1^{|I|-1+\sum_{i \geq 2} \mu_i}}\prod_{i\geq 2} \frac{c_i^{\mu_i}}{\mu_i!} \label{non F}\\ 
&+\sum_{i\geq 2} i!c_iF(\sum_{m \in I} x_m, |I|-1, i) \label{j terms}\\ 
&+c_1F(\sum_{m \in I} x_m,|I|-1,1) \label{c1 F}.
\end{align} 
Take the coefficient of $\prod_{i \geq 2}c_i^{\nu_i}$ from each line. 
Consider the expression
\begin{equation} \label{coef factor}
 \frac{(x)_{|I|-1- \sum_{i \geq 2 }(i-1)\nu_i}}{(|I|-1- \sum_{i \geq 2 }(i-1)\nu_i)!} (-1)^{|I|-1+\sum_{i\geq 2} \nu_i }\frac{(|I|-2+\sum_{i \geq 2} \nu_i)!}{c_1^{|I|-1+\sum_{i \geq 2} \nu_i}\prod_{i \geq 2} \nu_i!}.
\end{equation}
The coefficient from line \eqref{non F} is equal to the expression \eqref{coef factor} times 
\begin{equation}\label{non F part}
|I|-1- \sum_{i \geq 2 }(i-1)\nu_i.
\end{equation}
The coefficient from the $i$-th term at line \eqref{j terms} is equal to the expression \eqref{coef factor} times  
\begin{equation}\label{j term part}
\frac{i!}{(i-1)!}\nu_i.
\end{equation}
The coefficient from line \eqref{c1 F} is equal to the expression \eqref{coef factor} times  
\begin{equation}\label{c1 F part}
-(|I|-1+\sum_{i \geq 2}\nu_i).
\end{equation}
Adding the three expressions yields 0:
\[
|I|-1- \sum_{i \geq 2 }(i-1)\nu_i+ \sum_{i \geq 2} i \nu_i -(|I|-1+\sum_{i \geq 2}\nu_i) = 0. 
\]
This completes the proof. 
\end{proof}

\begin{theorem} \label{t F prod}
For an integers $1 \leq a \leq M$ and indeterminates $x_i, 1 \leq i \leq M$, 
\begin{equation} \label{F prod}
\sum_{s \in S(M, a)} \prod_{i=1}^a F(\sum_{m \in s_i} x_m, |s_i|-1, 1) = F(\sum_{i=1}^{M} x_i, M-1, a).
\end{equation}
\end{theorem}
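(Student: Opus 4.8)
The plan is to substitute the definition of $F(\cdot,\cdot,1)$ into the left side of \eqref{F prod} and compare the two sides coefficient by coefficient in the Taylor coefficients $c_j$, $j\ge 2$, of the base function, reducing \eqref{F prod} to a combinatorial identity to which Theorem \ref{t nu} applies. Fix an exponent vector $\vect{\nu}=(\nu_2,\nu_3,\dots)$ and extract the coefficient of $\prod_{j\ge 2}c_j^{\nu_j}$ from each side; write $W=\sum_{j\ge 2}\nu_j$ and $D=\sum_{j\ge 2}(j-1)\nu_j$. On the right of \eqref{F prod} this picks out the single term of $F(\sum_i x_i,M-1,a)$ with $\mu_j=\nu_j$ for $j\ge 2$ and $\mu_1=(M-1)-W$. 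On the left it forces, for each $s\in S(M,a)$, a composition $\mu^{(i)}\in C(|s_i|-1)$ on every block with $\sum_i\mu^{(i)}=\vect{\nu}$. Since $\sum_i|s_i|=M$, $\sum_i D_i=D$, and $\sum_i w_i=W$ (where $D_i=\sum_{j\ge2}(j-1)\mu^{(i)}_j$ and $w_i=\sum_{j\ge2}\mu^{(i)}_j$), a direct check shows the powers of $\alpha$, the overall signs, and the powers of $c_1$ agree termwise, so it remains to prove
\[
\sum_{s\in S(M,a)}\ \sum_{\substack{\{\mu^{(i)}\}_{i=1}^a\\ \sum_i\mu^{(i)}=\vect{\nu}}}\ \prod_{i=1}^a \frac{\bigl(\sum_{m\in s_i}x_m\bigr)_{(|s_i|-1)-D_i}}{\bigl((|s_i|-1)-D_i\bigr)!}\,\frac{(|s_i|-1+w_i)!}{\prod_{j\ge 2}\mu^{(i)}_j!}=\frac{(M-1+W)!}{(a-1)!\,(M-a-D)!}\,\Bigl(\sum_{i=1}^M x_i\Bigr)_{M-a-D}\prod_{j\ge 2}\frac{1}{\nu_j!}.
\]

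The combinatorial engine is Theorem \ref{t nu}, which collapses a sum over $S(M,a)$ of products of falling factorials into an alternating binomial sum in $\sum_i x_i$. The one obstruction to invoking it directly is that the falling factorials above have degree $(|s_i|-1)-D_i$, shortened by the defect $D_i$, rather than the degree $|s_i|-1$ occurring in \eqref{nu 2}. To supply the missing degree I would run Theorem \ref{t nu} with $N=M$, $k=a$, and an auxiliary indeterminate $\lambda$ in place of $\nu$, and expand both of its sides by the falling-factorial Vandermonde identity of Lemma \ref{fall identity}. On the right \eqref{nu 2} this gives $\bigl(\lambda-1+\sum_{m\in s_i}x_m\bigr)_{|s_i|-1}=\sum_{l_i}\binom{|s_i|-1}{l_i}(\lambda-1)_{l_i}\bigl(\sum_{m\in s_i}x_m\bigr)_{(|s_i|-1)-l_i}$, reproducing precisely the shortened falling factorials with $l_i$ playing the role of $D_i$; on the left \eqref{nu 1} the same lemma turns $\bigl((r+1)\lambda-1+\sum_i x_i\bigr)_{M-1}$ into $\sum_L\binom{M-1}{L}\bigl(\sum_i x_i\bigr)_{M-1-L}\bigl((r+1)\lambda-1\bigr)_L$, isolating the single falling factorial $\bigl(\sum_i x_i\bigr)_{M-a-D}$ (the term $L=a-1+D$) against an $(a-1)$-st finite difference in $r$. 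Matching the coefficient of $\bigl(\sum_i x_i\bigr)_{M-a-D}$ in the two expansions, equivalently fixing total defect $\sum_i l_i=D$, then reduces the target to a per-block weight comparison of $(|s_i|-1+w_i)!/\prod_{j\ge2}\mu^{(i)}_j!$ against the Vandermonde data $\binom{|s_i|-1}{D_i}(\lambda-1)_{D_i}$ and the residual finite difference on the left of \eqref{nu 1}.

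The step I expect to be the crux is this bridge between the single parameter $\lambda$ of Theorem \ref{t nu} and the multivariate content of the $c_j$: Theorem \ref{t nu} records only the total defect $l_i$ on each block, whereas the target weights each block by $(|s_i|-1+w_i)!/\prod_{j\ge2}\mu^{(i)}_j!$ and sums over all compositions $\mu^{(i)}$ of $|s_i|-1$ producing a prescribed defect $D_i$ subject to $\sum_i\mu^{(i)}=\vect{\nu}$. Making this precise requires showing that, after summing over all ways of distributing the $\nu_j$ among the blocks, these weights reassemble into $\binom{|s_i|-1}{D_i}(\lambda-1)_{D_i}$, with the leftover $\lambda$-polynomial matching the finite difference of $\bigl((r+1)\lambda-1\bigr)_L$, and with the global factors $(M-1+W)!$ and $\prod_{j\ge2}1/\nu_j!$ emerging from the bookkeeping. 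As a consistency check, when $\vect{\nu}=\vect{0}$ we have $W=D=0$, every $\mu^{(i)}$ is forced, and the displayed identity collapses to $\sum_{s\in S(M,a)}\prod_i\bigl(\sum_{m\in s_i}x_m\bigr)_{|s_i|-1}=\binom{M-1}{a-1}\bigl(\sum_i x_i\bigr)_{M-a}$, which is exactly Theorem \ref{t nu} at $\lambda=1$ after evaluating $\tfrac{1}{(a-1)!}\Delta^{a-1}\bigl(t+\sum_i x_i\bigr)_{M-1}\big|_{t=0}$. This base case suggests organizing the full argument as an induction on $W$, each step introducing one further factor $c_j$.
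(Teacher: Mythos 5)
Your reduction of the theorem to a coefficient identity is sound: extracting the coefficient of $\prod_{j\ge 2}c_j^{\nu_j}$ from both sides, the powers of $\alpha$ and $c_1$ and the overall signs do match termwise (the sign on the left is $(-1)^a(-1)^{M-a-W}=(-1)^{M-W}$, agreeing with $-(-1)^{(M-1)-W}$ on the right), and your displayed identity is the correct remaining content. The gap is precisely the step you flag as the crux, and I do not believe it can be closed along the route you propose. Theorem \ref{t nu} carries a single scalar parameter, so after the Vandermonde expansion of Lemma \ref{fall identity} the only per-block datum it can record is the defect $l_i$, through the factor $(\lambda-1)_{l_i}$. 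But the target weight $(|s_i|-1+w_i)!/\prod_{j\ge 2}\mu^{(i)}_j!$ depends on $w_i=\sum_{j\ge2}\mu^{(i)}_j$, which is not determined by $D_i$ (for instance $\mu^{(i)}_2=2$ and $\mu^{(i)}_3=1$ both give $D_i=2$ but $w_i=2$ versus $w_i=1$, hence different factorials). Worse, the blocks are coupled by the global constraint $\sum_i\mu^{(i)}=\vect{\nu}$, so the per-block weights cannot be reassembled independently into $\binom{|s_i|-1}{D_i}(\lambda-1)_{D_i}$; and the products $\prod_i(\lambda-1)_{l_i}$ over defect vectors $(l_i)$ with $\sum_i l_i=D$ are not linearly independent as polynomials in $\lambda$ (there are $p(D)$-many multisets but only a $(D+1)$-dimensional space of polynomials of degree $\le D$), so one cannot extract a separate identity for each defect vector from the single-parameter identity of Theorem \ref{t nu}. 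Your base case $\vect{\nu}=\vect{0}$ is correct but is exactly the case where all of this collapses, so it does not test the difficulty.

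The paper proves the theorem by a different mechanism that is built to handle this multinomial bookkeeping. It inducts on $M$: in $S(M+1,a+1)$ it isolates the block $w$ containing $M+1$ and applies the induction hypothesis to collapse the remaining $a$ blocks into a single $F(\sum_{m\in w^c}x_m,|w^c|-1,a)$, reducing the theorem to a two-factor convolution identity. It then extracts coefficients of $\prod_j c_j^{\nu_j}$, of $(x_{M+1})_b$, and of a monomial $\prod_i x_i^{n_i}$, multiplies by $t^{M-l}$ and sums over $M$, turning the identity into an equality of power series which is exactly Theorem \ref{t deriv set} with $f_A=t^a$, $f_B=t^b$, $f_i=(-\ln(1-t))^{n_i}$ for $i\le l$, and $\nu_h$ additional factors $f_i=t^h$ for each $h\ge 2$. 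The point is that each $c_h$-slot becomes a separate element $t^h$ of the underlying set being partitioned, so the multinomial factor $\prod_{h\ge2}\binom{\nu_h}{\tilde\mu_h}$ arises by counting subsets, rather than having to be matched against a one-parameter polynomial identity. If you want to salvage your approach, you would need a multiparameter generalization of Theorem \ref{t nu}; as written, the argument does not go through.
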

\begin{proof}
 We use induction on $M$. Equation \eqref{F prod} is true when $M=1$. Assume it is true for all values less than or equal to some $M\geq 1$. We note the equation is true when $a=1$ for any $M$. For $ 1\leq a \leq M$, we have by the induction hypothesis 
\begin{align} 
&\sum_{s \in S(M+1, a+1)} \prod_{i=1}^{a+1} F(\sum_{m \in s_i} x_m, |s_i|-1, 1) \nonumber\\ 
&= \sum_{w\subset [1,M+1], M+1 \in w} F(\sum_{m\in w^c}x_m, |w^c|-1,a)F(\sum_{m\in w}x_m, |w|-1,1)\label{F prod hyp}
\end{align}
where $w$ is the set of the set partition $s$ that contains $M+1$. 
For elements $\nu\in C(M)$ and $\mu \in C(j)$ for some $j$, we say $\mu \leq_1 \nu $ if
\begin{align*}
&\mu_i \leq \nu_i \text{ for } i \geq 2 \\ 
&\mu_1 \leq \nu_1-1. 
\end{align*}
If $\mu \leq_1 \nu$, denote $\mu' \in C(M- 1-j)$ by 
\begin{align*}
&\mu_i' = \nu_i -\mu_i\text{ for } i \geq 2 \\ 
&\mu_1' = \nu_1-1- \mu_1. 
\end{align*}
Fix an element $\nu\in C(M)$. Then the coefficient of 
\begin{equation}\label{c mono}
\prod_{i \geq 2} c_i^{\nu_i}
\end{equation}
in the right side of equation \eqref{F prod hyp} is 
\begin{align}
& \frac{-\alpha^{\sum_{i=1}^{M+1}x_i}}{(a-1)!}\sum_{w \subset [1,M+1], M+1 \in w} \sum_{\mu \leq_1 \nu, \mu\in C(|w|-1)}   \\ 
&\left((-1)^{\mu_1'} {\sum_{m\in w^c} x_m \choose |w^c|-1-\sum_{i\geq 2} (i-1)\mu_i'-(a-1)} (|w^c|-1+ \sum_{i \geq 2} \mu_i')!   \frac{  \alpha^{-(|w^c|-1-\sum_{i\geq 2} (i-1)\mu_i'-(a-1))}
}{ c_1^{|w^c|+\sum_{i \geq 2}\mu_i'} \prod_{i\geq 2}\mu_i'!}\right) \\ 
&\times \left((-1)^{\mu_1} {\sum_{m\in w} x_m \choose |w|-1-\sum_{i\geq 2} (i-1)\mu_i} (|w|-1+ \sum_{i \geq 2} \mu_i)!  \frac{  \alpha^{-(|w|-1-\sum_{i\geq 2} (i-1)\mu_i)}
}{ c_1^{|w|+\sum_{i \geq 2}\mu_i} \prod_{i\geq 2}\mu_i!}\right) \label{b line}.
\end{align}
For an integer $b \geq 0$, the coefficient of $(x_{M+1})_b$ at line \eqref{b line} is 
\begin{align}
& \frac{(-1)^{\nu_1+1}\alpha^{\sum_{i=1}^{M+1}x_i-(M-\sum_{i \geq 2}(i-1)\nu_i-a)}}{c_1^{M+1+\sum_{i\geq 2}\nu_i}} \label{alpha factor}\\ 
&\times \frac{1}{(a-1)!b!}\sum_{u \subset [1,M]} \sum_{\mu \leq_1 \nu, \mu\in C(|u|)}  
 {\sum_{m\in u^c} x_m \choose |u^c|-1-(a-1)-\sum_{i\geq 2} (i-1)\mu_i'} \frac{(|u^c|-1+ \sum_{i \geq 2} \mu_i')! }{ \prod_{i\geq 2}\mu_i'!}  \label{uc factor}\\ 
&\times {\sum_{m\in u} x_m \choose |u|-b-\sum_{i\geq 2} (i-1)\mu_i} \frac{(|u|+ \sum_{i \geq 2} \mu_i)! }{\prod_{i\geq 2}\mu_i!} \label{u factor}
\end{align}
where we have set $u =w \setminus \{M+1\}$.
The coefficient of the product of $(x_{M+1})_b$  and the monomial \eqref{c mono} in $F(\sum_{i=1}^{M+1}, M, a+1)$ is equal to the product of the expression at line \eqref{alpha factor} and 
\begin{equation} \label{right factor}
\frac{(M+ \sum_{i \geq 2} \nu_i)!
}{ a!b!\prod_{i\geq 2}\nu_i!}
 {\sum_{i=1}^M x_i \choose M-a-b-\sum_{i\geq 2} (i-1)\nu_i}.  
 \end{equation}
 Therefore it is sufficient to prove that sum at lines \eqref{uc factor} and \eqref{u factor} is equal to expression \eqref{right factor}. We prove this next by comparing the coefficients of the variables $x_i$. 
 
 Fix integers $l$ and $n_i \geq 0$ such that $l\leq M$. Consider the monomial 
 \begin{equation} \label{x mono}
 \prod_{i=1}^l x_i^{n_i}.
 \end{equation}
 The coefficient of monomial \eqref{x mono} in the sum at lines \eqref{uc factor} and \eqref{u factor} is 
\begin{align}
& \frac{1}{(a-1)!b!}\sum_{v \subset [1,l]}\sum_{j=0}^M \sum_{\mu \leq_1 \nu, \mu\in C(j)}  \label{summu}\\
& \frac{{  M-j-1-(a-1)-\sum_{i\geq 2} (i-1)\mu_i' \brack \sum_{i \in v^c} n_i}}{ (M-j-1-(a-1)-\sum_{i\geq 2} (i-1)\mu_i' )!} \left( \frac{(\sum_{i\in v^c} n_i)!}{\prod_{i \in v^c} n_i!}\right)\frac{(M-j-1+ \sum_{i \geq 2} \mu_i')! }{ \prod_{i\geq 2}\mu_i'!} \\ 
& \times \frac{{ j-b-\sum_{i\geq 2} (i-1)\mu_i \brack \sum_{i \in v} n_i}}{ (j-b-\sum_{i\geq 2} (i-1)\mu_i )!} \left( \frac{(\sum_{i\in v} n_i)!}{\prod_{i \in v} n_i!}\right)\frac{(j+\sum_{i \geq 2} \mu_i)! }{ \prod_{i\geq 2}\mu_i!} \\ 
& \times{M-l \choose j - |v|}
\end{align}
where we have set $j = |u|$ and used the fact that there are $\displaystyle  {M-l \choose j - |v|}$ ways to choose a set $u \in [1,M]$ such that $v \subset u$ and $v^c  \subset u^c$. 

Now each term in the above sum is independent of $\nu_1, \mu_1$, and $\mu_1'$. We thus re-arrange the summation at line \eqref{summu} to be 
\begin{equation} \label{sum re}
\sum_{v \subset [1,l]}\sum_{\tilde{\mu }\leq \tilde{\nu}} \sum_{j=0}^M  
\end{equation}
where $\tilde{\nu}$ is the fixed sequence $(\nu_i)_{i\geq2}$; $\tilde{\mu}$ is any sequence of non-negative integers $(\tilde{\mu}_i)_{i \geq 2}$ with 
\[
\tilde{\mu}_i \leq \nu_i \text{ for } i \geq 2
\]
and 
\[
\tilde{\mu}_i' = \nu_i - \tilde{\mu}_i \text{ for } i \geq 2.
\]

The coefficient of term \eqref{x mono} in expression \eqref{right factor} is 
\begin{equation} \label{right mono}
 \frac{(M+ \sum_{i \geq 2} \nu_i)!}{ a!b!\prod_{i\geq 2}\nu_i!}
 \frac{{M-a-b-\sum_{i\geq 2} (i-1)\nu_i \brack \sum_{i=1}^l n_i}}{(M-a-b-\sum_{i\geq 2} (i-1)\nu_i )!} \left( \frac{(\sum_{i=1}^l n_i)!}{\prod_{i =1}^l n_i!}\right). 
\end{equation}
We equate expression \eqref{right mono} with the sum beginning at line \eqref{summu} (using the ordering \eqref{sum re}) and simplify to obtain 
\begin{align}
& \sum_{v \subset [1,l]}\sum_{\tilde{\mu }\leq \tilde{\nu}} \sum_{j=0}^M  \\
& a\frac{(M-j-1+ \sum_{i \geq 2} \tilde{\mu}_i')! }{ (M-j-1-(|v^c|-1))!}\frac{{  M-j-1-(a-1)-\sum_{i\geq 2} (i-1)\tilde{\mu}_i' \brack \sum_{i \in v^c} n_i}}{ (M-j-1-(a-1)-\sum_{i\geq 2} (i-1)\tilde{\mu}_i' )!} (\sum_{i\in v^c} n_i)! \\ 
& \times \frac{(j+\sum_{i \geq 2} \tilde{\mu}_i)! }{ (j-|v|)!} \frac{{ j-b-\sum_{i\geq 2} (i-1)\mu_i \brack \sum_{i \in v} n_i}}{ (j-b-\sum_{i\geq 2} (i-1)\tilde{\mu}_i )!} (\sum_{i\in v} n_i)!\\ 
& \times \prod_{i\geq 2} \frac{(\tilde{\mu}_i+\tilde{\mu}_i' )!}{(\tilde{\mu}_i)!(\tilde{\mu}_i')!}\\
&= \frac{(M+ \sum_{i \geq 2} \nu_i)!}{ (M-l)!}
 \frac{{M-a-b-\sum_{i\geq 2} (i-1)\nu_i \brack \sum_{i=1}^l n_i}}{(M-a-b-\sum_{i\geq 2} (i-1)\nu_i )!} (\sum_{i=1}^l n_i)!.
\end{align}
We claim the above equation is true for any integer $M\geq l$. Thus multiply both sides above the above equation by $t^{M-l}$ and sum over $M \geq l$. The claim is thus equivalent to the equality of power series 
\begin{align}
& \sum_{v \subset [1,l]}\sum_{\tilde{\mu }\leq \tilde{\nu}} \\
& (\frac{d}{dt})^{\sum_{i\geq2} \tilde{\mu}_i'+|v^c|-1}(a t^{a-1} (-\ln(1-t))^{\sum_{i\in v^c}n_i} \prod_{i \geq 2} (t^i)^{\tilde{\mu}_i'} )\\ 
&\times(\frac{d}{dt})^{\sum_{i\geq2} \tilde{\mu}_i+|v|}(t^b (-\ln(1-t))^{\sum_{i\in v}n_i} \prod_{i \geq 2} (t^i)^{\tilde{\mu}_i} )\\ 
& \times \prod_{i\geq 2} \frac{(\tilde{\mu}_i+\tilde{\mu}_i' )!}{(\tilde{\mu}_i)!(\tilde{\mu}_i')!}\label{mu count}\\
&= (\frac{d}{dt})^{\sum_{i\geq2} \nu_i+l}  (t^a t^b (-\ln(1-t))^{\sum_{i=1}^l n_i} \prod_{i \geq 2} (t^i)^{\nu_i} ).
\end{align}
This equality follows from Theorem \ref{t deriv set} using the ring $R$ of power series in $t$; $\delta$ differentiation with respect to $t$; and
\begin{align*}
&f_A = t^a\\ 
&f_B = t^b \\ 
&f_i = (-\ln(1-t))^{n_i} \text{ for } 1 \leq i \leq l,
\end{align*} 
such that for each integer $h\geq 2$ there are exactly $\nu_h$ indices $i$ with $l+1\leq i \leq l+\sum_{\geq 2} \nu_h$ and  
\[
f_i = t^h.
\]
Here we have used the fact that the number \eqref{mu count} counts the number of subsets of $[1,l+\sum_{\geq 2} \nu_h]$ that contain $v$, do not contain $v^c$, and contain exactly $\tilde{\mu}_h$ numbers $i$ such that $f_i = t^h$, for each $h\geq2$. This proves the claim and completes the proof.
\end{proof}

\begin{theorem} \label{t Taylor int}
Suppose $\gamma_i \in \mathbb{Z}$ for $1 \leq i \leq d$. Then
\begin{equation} \label{Taylor int}
\frac{\partial(\phi,\vect{n})}{\prod_{i=1}^d n_i!} \in \alpha^{\vect{n} \cdot \vect{\gamma} }\mathbb{Z}[\alpha^{-1 }, c_1^{-1}, c_2, c_3, \ldots, c_{\Sigma \vect{n}}].
\end{equation}
\end{theorem}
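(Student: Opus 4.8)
The plan is to start from the closed form of Theorem~\ref{t phi F}. Writing $I$ for any ordered multiset with $\#(i,I)=n_i$, the iterated derivative $\partial(\phi,I)$ coincides with $\partial_{\vect n}\phi|_{\vect 0}=\partial(\phi,\vect n)$, so Theorem~\ref{t phi F} gives $\partial(\phi,\vect n)=F(x,\Sigma\vect n-1,1)$ with $x:=\vect n\cdot\vect\gamma$, an integer by hypothesis. Expanding $F(x,\Sigma\vect n-1,1)$ as a sum over $\mu\in C(\Sigma\vect n-1)$, each summand is $\alpha^{x}$ times the monomial $\alpha^{-k}c_1^{-(\Sigma\vect n+s)}\prod_{i\ge 2}c_i^{\mu_i}$ (where $s=\sum_{i\ge2}\mu_i$) times a rational number, and $k=\Sigma\vect n-1-\sum_{i\ge2}(i-1)\mu_i$ is exactly the lower index of the binomial $\binom{x}{k}$. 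First I would record the three ``support'' facts that place this monomial in $\alpha^{x}\mathbb Z[\alpha^{-1},c_1^{-1},c_2,\dots,c_{\Sigma\vect n}]$: the binomial vanishes unless $k\ge0$, which makes the surviving $\alpha$-exponent $-k\le 0$ a power of $\alpha^{-1}$; the $c_1$-exponent $-(\Sigma\vect n+s)$ is always $\le 0$; and $\mu_i\neq 0$ for some $i\ge 2$ together with $k\ge 0$ forces $i-1\le\sum_{j\ge2}(j-1)\mu_j=\Sigma\vect n-1-k\le\Sigma\vect n-1$, so only $c_2,\dots,c_{\Sigma\vect n}$ occur.

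Next I would reduce integrality of the coefficients to a single arithmetic statement. For fixed $\vect n$ distinct $\mu$ produce distinct monomials $\prod_{i\ge2}c_i^{\mu_i}$ (the monomial determines every $\mu_i$, hence $s$, $k$, and the $\alpha$- and $c_1$-exponents), so the claimed membership holds if and only if, for every $\mu\in C(\Sigma\vect n-1)$ with $k\ge 0$,
\[
T_\mu:=\frac{1}{\prod_{i=1}^d n_i!}\binom{x}{k}\frac{(\Sigma\vect n-1+s)!}{\prod_{i\ge 2}\mu_i!}\in\mathbb Z .
\]
It is worth stressing why this is not routine: counting factorial ``slots,'' the denominator $\prod_i n_i!\prod_{i\ge2}\mu_i!$ has argument-sum $\Sigma\vect n+s$, exceeding the numerator argument $\Sigma\vect n-1+s$, so $T_\mu$ is genuinely not a multinomial coefficient, and the bare factor $\binom{x}{k}\in\mathbb Z$ does not suffice. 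For instance, with $\vect n=(2,0,\dots,0)$ and $\mu=(1,0,\dots)$ one gets $T_\mu=\binom{2\gamma_1}{1}/2=\gamma_1$, which is integral only because $x=2\gamma_1$ is even. Thus the hypothesis $\gamma_i\in\mathbb Z$ must enter through the precise linear form $x=\sum_i n_i\gamma_i$, not merely through $x\in\mathbb Z$.

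To prove $T_\mu\in\mathbb Z$ I would first expand $\binom{x}{k}$ by iterating the Vandermonde identity of Lemma~\ref{fall identity} after writing $x=\sum_{i}\sum_{t=1}^{n_i}\gamma_i$ as a sum of $\Sigma\vect n$ terms, obtaining
\[
\binom{x}{k}=\sum_{\substack{(k_{i,t}):\,k_{i,t}\ge0\\ \sum_{i,t}k_{i,t}=k}}\ \prod_{i,t}\binom{\gamma_i}{k_{i,t}} ,
\]
with each $\binom{\gamma_i}{k_{i,t}}\in\mathbb Z$. Grouping the tuples $(k_{i,t})$ by their multiplicity profile $e_{i,v}=\#\{t:k_{i,t}=v\}$ (so $\sum_v e_{i,v}=n_i$), there are $\prod_i n_i!/\prod_{i,v}e_{i,v}!$ tuples per profile; hence the factor $\prod_i n_i!$ in $T_\mu$ cancels and one is left with integer binomials $\binom{\gamma_i}{v}$ multiplied by coefficients of the shape $\tfrac{1}{\Sigma\vect n+s}\binom{\Sigma\vect n+s}{\{e_{i,v}\},\{\mu_i\}}$, where the $(\Sigma\vect n-1+s)!$ has combined with the denominators $\prod_{i,v}e_{i,v}!\prod_{i\ge2}\mu_i!$.

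This residual factor $\tfrac{1}{\Sigma\vect n+s}$ is the real obstacle: it is the signature of a rooted, labelled structure and is not integral profile by profile. I would close the argument exactly along the lines indicated by the phrase ``counts terms in a derivative'': realize the weighted sum as the number of terms produced when $\partial_{\vect n}$ is applied to the composite in the defining relation~\eqref{zero}, equivalently as a (signed) count of decorated rooted trees whose automorphism groups absorb $\prod_i n_i!$, so that the coefficient of each monomial is manifestly an integer. The combinatorial input making the $\tfrac1{\Sigma\vect n+s}$ cancel is a cycle-lemma type statement applied to the $\Sigma\vect n+s$ decorated nodes. Setting up this model precisely, and verifying that the cycle-lemma cancellation matches the factor $\tfrac1{\Sigma\vect n+s}$, is the step I expect to be hardest; by comparison the support facts of the first paragraph and the Vandermonde bookkeeping of the third are routine.
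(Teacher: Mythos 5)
Your reduction is correct as far as it goes: Theorem \ref{t phi F} does give $\partial(\phi,\vect{n})=F(\vect{n}\cdot\vect{\gamma},\Sigma\vect{n}-1,1)$, distinct $\mu$ contribute distinct monomials, the support constraints on the exponents of $\alpha^{-1}$, $c_1^{-1}$ and $c_2,\dots,c_{\Sigma\vect{n}}$ are right, and the theorem is indeed equivalent to $T_\mu\in\mathbb{Z}$ for every admissible $\mu$. You also correctly diagnose why this is not a formal consequence of $\binom{x}{k}\in\mathbb{Z}$. But the argument then stops at precisely the point where the work is: the assertion that a decorated-rooted-tree count with a cycle-lemma cancellation absorbs the residual factor $\frac{1}{\Sigma\vect{n}+s}$ is announced, not proved, and you say yourself that setting up that model and verifying the cancellation is the hardest step. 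Since that cancellation is the entire arithmetic content of the theorem (your own example $T_\mu=\gamma_1$ for $\vect{n}=(2,0,\dots,0)$ shows the integrality genuinely uses the shape of $x=\sum_i n_i\gamma_i$ and not just $x\in\mathbb{Z}$), the proof does not close as written. This is a genuine gap, not a stylistic omission.

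The paper avoids this difficulty by not using the closed formula at all beyond the base case $\Sigma\vect{n}=1$. It inducts on $\Sigma\vect{n}$ using the recursion obtained by applying $\partial_{\vect{n}}$ to equation \eqref{zero} and solving for $c_1\,\partial(\phi,\vect{n})$: the remaining terms are sums over multiset partitions $J$ of lower-order products $\prod_l\partial(\phi,J_l)$, weighted by $(\gamma_r)_k\,\alpha^{\gamma_r-k}$ (from $\partial_{\vect{n}}(a_r\phi^{\gamma_r})$) or by $k!\,c_k$. Grouping equivalent partitions and dividing by $\prod_i n_i!$ converts each weight into $\binom{\gamma_r}{k}$ times a multinomial coefficient times the normalized derivatives $\partial(\phi,J_l)/\prod_i\#(i,J_l)!$, which lie in the stated ring by the induction hypothesis; the only arithmetic input is $\binom{\gamma_r}{k}\in\mathbb{Z}$. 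So if you wish to keep your direct route you must actually construct and verify the combinatorial model behind $T_\mu\in\mathbb{Z}$ (which would in effect be a new result); otherwise the inductive recursion is the path that works using only elementary integrality facts.
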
 
\begin{proof} 
We use induction on $\Sigma \vect{n}$. From Theorem \ref{t phi F}, statement \eqref{Taylor int} is true when $\Sigma \vect{n}=1$. Assume it is true for all $\vect{n}$ with $\Sigma \vect{n}\leq m$ for some $m \geq 1$. Given a $\vect{n}$ with $\Sigma \vect{n}=m+1$, fix an $r$ such that $n_r\geq 1$. Consider
\begin{equation} \label{partial n a}
\partial_{\vect{n}}(a_r \phi^{\gamma_r}).
\end{equation}
After setting $\vect{a}$ to $\vect{0}$, expression \eqref{partial n a} is equal to 
\begin{equation} \label{after 0}
n_r \sum_{k=1}^{\Sigma \vect{n}-1} (\gamma_r)_k \alpha^{\gamma_r - k} \sum_{J \in \mathrm{Parts}(I,k)} \prod_{i=1}^k \partial(\phi,J_i)
\end{equation}
where $I$ is the ordered multiset satisfying $I(h) \leq I(h+1)$, and with $r$ occurring with multiplicity $n_r-1$ and $i$ occurring with multiplicity $n_i$ for all other $i$. Now fix a $k$ and $J$. 
Suppose that there are $v$ distinct sets out of the $k$ sets of $J$. Call these sets $\tilde{J}_1, \ldots, \tilde{J}_v$ with $\tilde{J}_i$ appearing $b_i$ times in $J$. The number of $J'\in \mathrm{Parts}(I,k)$ equivalent to $J$ is then 
\[
\frac{1}{\prod_{i=1}^v b_i!}\prod_{i=1}^d \frac{\#(i,J)!}{\prod_{l=1}^k \#(i,J_l)!}.
\]
Collect these terms for $J'$ equivalent to $J$ and divide by $\prod_{i=1}^d n_i!$. The total coefficient 
 is then 
\begin{align*}
&\frac{n_r (\gamma_r)_k \alpha^{\gamma_r - k}}{n_r\prod_{i=1}^d \#(i,J)!}   \frac{1}{\prod_{i=1}^v b_i!}\prod_{i=1}^d \frac{\#(i,J)!}{\prod_{l=1}^k \#(i,J_l)!}\prod_{i=1}^k  \partial(\phi,J_i)  \\
=& {\gamma_r \choose k} \mathrm{multinomial}((b_i)_{i=1}^v)\prod_{l=1}^k \frac{\partial(\phi,J_l)}{\prod_{i=1}^d \#(i,J_l)!}
\end{align*}   
where have used 
\[
\sum_{i=1}^v b_i = k.
\]
By the induction hypothesis 
\[
 \frac{\partial(\phi,J_l)}{\prod_{i=1}^d \#(i,J_l)!} \in \displaystyle \alpha^{\sum_{m\in J_l}\gamma_m} \mathbb{Z} [\alpha^{-1}, c_1^{-1}, c_2, \ldots, c_{|J_l|}],
\]
and $\displaystyle {\gamma_r \choose k}$ is an integer by the assumption that $\gamma_r$ is an integer. 

Next, consider a fixed $k$-th term at line \eqref{k c} where $I$ is the ordered multiset satisfying $I(h) \leq I(h+1)$, and with $n_i = \#(i, I)$. Collecting terms for all equivalent $J$ and applying similar reasoning above yields an element in the set at line \eqref{Taylor int}, with $k!$ taking the role of $(\gamma_r)_k$. This completes the proof.
\end{proof}

\section{A transformation rule }

Now we present definitions for Theorem \ref{t phi transform}.

Using base function \eqref{2 term base}, we rename its zero $\alpha$ by $\alpha_1$, and we denote the corresponding zero $\phi(\vect{a}; \vect{\gamma}, g,\alpha_1)$ of $f(z)$ by 
\[
\phi(\vect{a};  \vect{\gamma}, b,\beta_1).
\]

For a non-zero $\beta_2 \in \mathbb{C}$, we denote $\alpha_2$ by
\begin{equation} \label{alpha2}
 \alpha_2=\alpha_1^{\frac{1}{\beta_2}}. 
\end{equation}

Let $M(d)$ denote the set of $d$-tuples $\vect{n}$
\[
\vect{n} = (n_i)_{i=1}^d
\]
 of non-negative integers $n_i$ such that $\Sigma \vect{n} \geq 1$. 
 
 For $\vect{n}\in M(d)$, let $V(k, \vect{n})$ denote the set of $k \times d$ arrays $\vect{v}$ 
 \[
 \vect{v} =  (v_{i,j}) \text{ for } 1 \leq i \leq d \text{ and } 1 \leq j \leq k
 \]
 where 
\[
\sum_{j=1}^k v_{i,j} = n_i
\]
and $v_j \in M(d)$ where 
\[
v_j=(v_{i,j})_{i=1}^d.
\]
Let $\mathrm{perm}(\vect{v})$ denote the number of permutations of the multiset $(v_j)_{j=1}^k$.
Let $w(\vect{n}, \vect{\gamma}, \beta)$ denote
 \[
w(\vect{n}, \vect{\gamma}, \beta)= ( \beta_1^{-1}-1+\beta_1^{-1}\sum_{i =1}^d n_i\gamma_{i} )_{\Sigma \vect{n}-1}.
 \]
 
\begin{theorem}\label{t phi transform} With the above notation, as Taylor series about $\vect{a}=\vect{0}$ 
\begin{align*}
\phi(\vect{a};  \vect{\gamma}, b,\beta_1)^{\frac{1}{\beta_2}} &= \phi(\vect{a};  \beta_2 \vect{\gamma}, b,\beta_2 \beta_1).
\end{align*}
\end{theorem}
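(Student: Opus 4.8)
The plan is to prove the identity by comparing the Taylor coefficients of $\vect{a}^{\vect{n}} = \prod_i a_i^{n_i}$ on each side, using the closed formula of Theorem \ref{t main}, and then reducing the equality of coefficients to a scalar falling-factorial identity that is dispatched by Theorem \ref{t nu} and Lemma \ref{Newton series}. Throughout I write $N = \Sigma\vect{n}$ and $c = \sum_{i=1}^d n_i\gamma_i$. The first preparatory step is to rewrite the product $\prod_{i=1}^{N-1}(-1 + i\beta - \sum_i n_i\gamma_i)$ occurring in Theorem \ref{t main}: factoring out $(-\beta)^{N-1}$ turns it into the falling factorial $w(\vect{n},\vect{\gamma},\beta) = (\beta^{-1}-1+\beta^{-1}c)_{N-1}$, so that the Taylor coefficients of both $\phi(\vect{a};\vect{\gamma},b,\beta_1)$ and $\phi(\vect{a};\beta_2\vect{\gamma},b,\beta_2\beta_1)$ are expressed through $w$. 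The branch of the left-hand side $\phi^{1/\beta_2}$ is fixed by the base-point condition $\alpha_2 = \alpha_1^{1/\beta_2}$, so $\phi^{1/\beta_2}$ is an unambiguous Taylor series about $\vect{a}=\vect{0}$.

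Next I would expand the left-hand side. Writing $\phi(\vect{a};\vect{\gamma},b,\beta_1) = \alpha_1(1+u(\vect{a}))$ with $u(\vect{0})=0$, the generalized binomial theorem gives $\phi^{1/\beta_2} = \alpha_1^{1/\beta_2}\sum_{k\ge 0}\binom{1/\beta_2}{k}u^k$, and the coefficient of $\vect{a}^{\vect{n}}$ in $u^k$ is a sum over ordered decompositions $\vect{n} = v_1+\cdots+v_k$ into nonzero multi-indices, which is exactly what $V(k,\vect{n})$ together with $\mathrm{perm}(\vect{v})$ records. Substituting the Theorem \ref{t main} formula for each factor, the powers of $\alpha_1$ and of $g'(\alpha_1)=b\beta_1\alpha_1^{\beta_1-1}$ collapse (their exponents are additive over the decomposition) and factor out as a common prefactor. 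After converting the decomposition sum into a sum over set partitions $s\in S(N,k)$ of the multiset $I$ with $\#(i,I)=n_i$ (whose parts I denote $J_1,\dots,J_k$, and where $w(J_i)$ abbreviates $w$ of the multi-index of $J_i$), the equality of coefficients reduces to
\[
\sum_{k=1}^{N}(1/\beta_2)_k\,\beta_1^{-k}\sum_{s\in S(N,k)}\prod_{i=1}^{k}w(J_i) = \frac{1}{\beta_1\beta_2}\,w(\vect{n},\beta_2\vect{\gamma},\beta_2\beta_1).
\]

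To finish, set $P(t) = (t\beta_1^{-1}-1+\beta_1^{-1}c)_{N-1}$, a polynomial of degree $N-1$ with $P(1) = w(\vect{n},\vect{\gamma},\beta_1)$ and $P(1/\beta_2) = w(\vect{n},\beta_2\vect{\gamma},\beta_2\beta_1)$. Applying Theorem \ref{t nu} with $\nu = \beta_1^{-1}$ and $x_m = \beta_1^{-1}\gamma_{I(m)}$, read from right to left, rewrites $\sum_{s\in S(N,k)}\prod_i w(J_i)$ as $\frac{\beta_1^{k-1}}{(k-1)!}\sum_{r=0}^{k-1}(-1)^{k-1-r}\binom{k-1}{r}P(r+1)$, and the displayed identity becomes
\[
\sum_{k=1}^{N}\frac{(1/\beta_2)_k}{(k-1)!}\sum_{r=0}^{k-1}(-1)^{k-1-r}\binom{k-1}{r}P(r+1) = \frac{1}{\beta_2}\,P(1/\beta_2).
\]
Using $(1/\beta_2)_k = (1/\beta_2)(1/\beta_2-1)_{k-1}$, this is precisely the Newton forward-difference expansion of Lemma \ref{Newton series} applied to $P$ and evaluated at $x = 1/\beta_2$, which gives $\frac{1}{\beta_2}P(1/\beta_2)$ as required.

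I expect the main obstacle to be the bookkeeping in the middle step: converting the sum over $V(k,\vect{n})$ with weights $\mathrm{perm}(\vect{v})$ into a sum over set partitions $S(N,k)$. The multinomial factors $\prod_i n_i!/\prod_{i,j}v_{i,j}!$ produced by this conversion must combine with $\mathrm{perm}(\vect{v})$ and with the $1/\prod_i v_{i,j}!$ coming from the Taylor-coefficient normalization to yield exactly the clean $k!\sum_{s\in S(N,k)}$ weighting; verifying that these symmetry factors cancel correctly is the only genuinely delicate point, after which the argument is a mechanical application of the two cited identities. As an independent check on the statement, one can argue conceptually via the substitution $z = y^{\beta_2}$, under which $1 + b z^{\beta_1} + \sum_i a_i z^{\gamma_i}$ pulls back to $1 + b y^{\beta_2\beta_1} + \sum_i a_i y^{\beta_2\gamma_i}$ on a neighborhood of $\alpha_2$, so that uniqueness of the implicit solution forces $\phi(\vect{a};\beta_2\vect{\gamma},b,\beta_2\beta_1)^{\beta_2} = \phi(\vect{a};\vect{\gamma},b,\beta_1)$.
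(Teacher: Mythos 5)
Your proposal follows the paper's proof essentially step for step: expand $\phi^{1/\beta_2}$ by the generalized binomial theorem using the falling-factorial form of Theorem \ref{t main}, convert the resulting sum over ordered decompositions ($V(k,\vect{n})$ with $\mathrm{perm}$ weights) into a sum over multiset partitions, apply Theorem \ref{t nu} with $\nu=\beta_1^{-1}$ and $x_m=\beta_1^{-1}\gamma_{I(m)}$, and finish with the Newton expansion of Lemma \ref{Newton series} evaluated at $x=1/\beta_2$ --- exactly the paper's route, including the same reduced identity and the same delicate bookkeeping step that the paper also passes over quickly. The only material you add is the closing sanity check via the substitution $z=y^{\beta_2}$, which the paper does not include.
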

\begin{proof}
We have by Theorem \ref{t main} 
\begin{align*}
\phi(\vect{a};  \vect{\gamma}, b,\beta_1) &= \alpha_1\left(1+\sum_{\vect{n} \in M(d)} (-1)^{\Sigma \vect{n}}\frac{\alpha_1^{ \sum_{i=1}^d n_i(\gamma_{i}-1)}}{g'(\alpha)^{\Sigma \vect{n}} } \beta_1^{\Sigma \vect{n}-1}( \beta_1^{-1}-1+\beta_1^{-1}\sum_{i =1}^d n_i\gamma_{i} )_{\Sigma \vect{n}-1} \prod_{i=1}^d \frac{a_i^{n_i}}{n_i!}\right)\\
&=\alpha_1\left(1+\beta_1^{-1} \sum_{\vect{n} \in M(d)} 
\alpha_1^{ \sum_{i=1}^d n_i\gamma_{i}}  w(\vect{n}, \vect{\gamma}, \beta)\prod_{i=1}^d \frac{a_i^{n_i}}{n_i!}\right).
\end{align*}
Therefore
\begin{align*}
\phi(\vect{a};  \vect{\gamma}, b,\beta_1)^{\frac{1}{\beta_2}} &= \alpha_1^{\frac{1}{\beta_2}}\left(1+\beta_1^{-1} \sum_{\vect{n}\in M(d)} 
\alpha_1^{ \sum_{i=1}^d n_i\gamma_{i}}  w(\vect{n}, \vect{\gamma}, \beta) \prod_{i=1}^d \frac{a_i^{n_i}}{n_i!}\right)^{\frac{1}{\beta_2}}\\ 
&= \alpha_2\left(1+ \sum_{k=1}^\infty \beta_1^{-k}{\beta_2^{-1} \choose k}( \sum_{\vect{n}\in M(d)} 
\alpha_1^{ \sum_{i=1}^d n_i\gamma_{i}}  w(\vect{n}, \vect{\gamma}, \beta)\prod_{i=1}^d \frac{a_i^{n_i}}{n_i!})^k\right)
\end{align*}
by the binomial theorem, using the fact that $\vect{a}$ is sufficiently close to $\vect{0}$.

For an $\vect{m} \in M(d)$, the coefficient of $\displaystyle  \prod_{i=1}^d \frac{a_i^{m_i}}{m_i!}$ in the above sum is 
\begin{equation} \label{m coefficient}
\alpha_2  \frac{\alpha_1^{\sum_{i=1}^d m_i \gamma_i} }{\prod_{i=1}^d m_i!} \sum_{k=1}^{\Sigma \vect{m}}\beta_1^{-k} {\beta_2^{-1} \choose k} \sum_{v \in V(k,\vect{m})}  (\mathrm{perm}(v) \prod_{i=1}^d \mathrm{mult}((v_{i,j})_{j=1}^k))\prod_{j=1}^k w(\vect{v_j}, \vect{\gamma}, \beta)
\end{equation}
Let $I \in \mathrm{Multiset}(d)$ be the ordered multiset in which $\#(i,I)=m_i$ and ordered so that $I(j) \leq I(j+1)$. For $J \in \mathrm{Parts}(I,k)$, let $w(J, \vect{\gamma}, \beta)$ denote
 \[
w(J, \vect{\gamma}, \beta)= \prod_{h=1}^k( \beta^{-1}-1+\beta^{-1}\sum_{i \in J_h}^d \gamma_{i} )_{|J_h|-1}.
 \]
Then we may write expression \eqref{m coefficient} as 
\begin{equation} \label{may write m coeff}
\alpha_2  \frac{\alpha_1^{\sum_{i=1}^d m_i \gamma_i} }{\beta_1\beta_2\prod_{i=1}^d m_i!} \sum_{k=1}^{\Sigma \vect{m}} (\beta_2^{-1}-1)_{k-1} (\beta^{-1})^{k-1}\sum_{J \in \mathrm{Parts}(I,k)}  w(J, \vect{\gamma}, \beta).
\end{equation}
By Theorem \ref{t nu}, we have 
\[
(\beta^{-1})^{k-1}\sum_{J \in \mathrm{Parts}(I,k)}  w(J, \vect{\gamma}, \beta)= \frac{1}{(k-1)!}\sum_{r=0}^{k-1} (-1)^{k-1-r}{k-1 \choose r} ((r+1)\beta_1^{-1}-1+\sum_{i\in I} \gamma_i)_{|I|-1}
\]
Let $c_k$ denote the right side of the above equation. By Lemma \ref{Newton series}, we have expression \eqref{may write m coeff} is equal to 
\[
\alpha_2  \frac{\alpha_1^{\sum_{i=1}^d m_i \gamma_i} }{\beta_1\beta_2\prod_{i=1}^d m_i!}  ((\beta_1\beta_2)^{-1}-1+\beta_1^{-1}\sum_{i=1}^d m_i \gamma_i)_{\Sigma \vect{m}-1}.
\]
Using equation \eqref{alpha2}, we see that this is the coefficient of $\displaystyle  \prod_{i=1}^d \frac{a_i^{m_i}}{m_i!}$ in the series for $\phi(\vect{a};  \beta_2 \vect{\gamma}, b,\beta_2 \beta_1)$. This completes the proof. 
\end{proof}

\section{Further work} \label{further}
\begin{itemize}

\item Find a combinatorial meaning of the integer coefficients of the $c_i$ terms using trees. 
 
\item Find a combinatorial meaning of the factored coefficients in \cite{DeFranco} when 
\[
g(z) = 1+\frac{a_{i+d}}{a_i} z^{d}.
\]
 
\item Generalize NRS using these combinatorial meanings. 

\item Directly prove the factorization of coefficients in \cite{DeFranco} from the formulas here. 

\item Generalize Theorem \ref{t phi transform} for complex-exponent polynomials $g(z)$ with multiple terms. 

\item Use the method of Lagrange or a modification to construct iterated radical formulas for polynomial zeros and relate them to Taylor series.

\end{itemize}

\end{document}

